\newtheorem{prop}{Proposition}[section]
\newtheorem{defin}[prop]{Definition}
\newtheorem{lem}[prop]{Lemma}
\newtheorem{stw}[prop]{Statement}
\numberwithin{equation}{section}
\theoremstyle{plain}
\newtheorem{theorem}{Theorem}[section]
\newtheorem{lemma}{Lemma}[section]
\newtheorem{remark}{Remark}[section]
\newtheorem{definition}{Definition}[section]
\newtheorem{example}{Example}[section]
\newtheorem*{corollary*}{Corollary}
\def\col{\mathrm{col}\,}
\def\sign{\mathrm{sign}\,}
\def\Im{\mathrm{Im}\,}
\date{}
\begin{document}

\title{Spectral analysis of self-adjoint  first order differential operators \\ with nonlocal potentials}

\author{K. D\k{e}bowska, I.~L.~Nizhnik}

\maketitle

\begin{abstract}
The paper considers the general form of self-adjoint boundary value problems for momentum operators with nonlocal potentials.  We give an analysis of the eigenvalue distribution  as zeros of the characteristic functions, for which their explicit representations are obtained in terms of nonlocal potentials. 
\end{abstract}
2020 Mathematics Subject Classification:  34L05, 47E05.\\
Keywords and phrases: differential operator, nonlocal interaction, condition for  self-adjointness, resolvent structure, eigenvalues.
\section{Introduction}
The boundary values of  solutions to boundary value problems for differential equations are usually involved only in the boundary conditions.  However, as successful experience in studying important applied problems has shown, the boundary characteristics of solutions are also involved into the equations.  For problems in quantum mechanics  with interaction concentrated in the neighborhood of a single point, or on a discrete set of such points, or even in the neighborhood of a set $\Omega_0$ of a small dimension, models are effective when the interaction is considered to be centered on such a set $\Omega_0.$  From a mathematical point of view,  this idea comes down to the following: the indicated set $\Omega_0$ is attached to the boundary of the domain, the equation is considered on functions that equal to zero in the neighborhood of $\Omega_0$, and the resulting symmetric operator is extended to self-adjoint operators.  These operators describe interactions centered on $\Omega_0$.  A large number of works has been published in this direction, we will only mention the important monograph \cite{Albe_book}.  In the same area of research, \cite{AN07} proposes a method in which  the differential equation   contains boundary values
on the set $\Omega_0$ and certain function factors.   These function factors are called nonlocal potentials in \cite{AN07, AN13}.  For a number of models with nonlocal potentials, a detailed spectral analysis \cite{AHN,AN07, AN13,CN19,DN19,Nizh09,Nizh10,Nizh11,Nizh12} has been carried out.  The very list of titles of these works indicates a fairly wide range of research.  We only note that  the studied models are the exact solvable models, which allowed to obtain a number of new efficient algorithms for solving inverse spectral problems without using analogues of the Gelfand--Levitan--Marchenko integral equations.

 Let us demonstrate the above in more detail  for the simplest self--adjoint operator $\mathfrak{D}=i\frac{d}{dx}$ in the space $L_2(-\infty,+\infty )$ of square--summable functions on the entire axis.  The domain of the operator $\mathfrak{D}$ is the Sobolev space $W_2^1( -\infty,+\infty )$ containing functions $\psi,\,\psi' \in L_2. $ We obtain an analogue of the operator $\mathfrak{D}$, which describes the point interaction at the point $x_0=0$, by considering the symmetric operator $\mathfrak{D}_{\min}\psi(x)=i\frac{d\psi(x)}{dx}$,\,\,$x\neq0$ on the set of functions
$W_2^1(\Omega),$\,$\Omega=(- \infty,+\infty)\backslash \{0\} $ satisfying the boundary conditions $\psi(-0)=0,$\,$\psi(+0)=0$.  The operator $\mathfrak{D}_{\min}^*=\mathfrak{D}_{\max}$, where the domain of $\mathfrak{D}_{\max}$ consists of the entire space $W_2^1(\Omega).$ Every self--adjoint extension $\mathfrak{D}_{\alpha}$ of the operator $\mathfrak{D}_{\min}$ is a restriction of the operator $\mathfrak{D}_{\max}$ to the  space $L_2,$ and the domain of the operator $\mathfrak{D}_{\alpha}$ consists of the set $\{\psi: \psi \in W_2^1(\Omega),\psi(+0)=e  ^{i\alpha}\psi(-0) \}$, where $\alpha$ is a real number characterizing the operator $\mathfrak{D}_{\alpha}$.  The operator $\mathfrak{D}_{\alpha}$ itself describes the point interaction at the point $x_0=0$.  Of course, the operators $\mathfrak{D}$ and $\mathfrak{D}_{\alpha}$ can be perturbed by an operator $V$ of the form $V\psi(x)=v_1(x)\psi(-0)+v_2(x)\psi(+0)$, where the functions $v_1$ and $v_2$ belong to the space $L_2(\Omega),$  without loss of the initial operators domain.  However, the operators $\mathfrak{D}+V$  and $\mathfrak{D}_{\alpha}+V$ will be self-adjoint only for $V\equiv 0.$ To preserve self-adjointness under such perturbations, it is necessary to change the domain of the considered operators  with nonlocal potentials, using nonlocal boundary conditions, when information about the solution in the entire region $\Omega$  is used in the conditions.  For example, the operator is self-adjoint $$\mathfrak{D}_{v,\alpha}=\mathfrak{D}_{\max}\psi+v(x)[\psi(-0)+\psi(+0)] $$ on the domain 
\begin{equation}\label{1.1}
 \{\psi: \psi \in W_2^1(\Omega), \psi(+0)-i\langle\psi,v\rangle_{L_2} = e^{i\alpha}[\psi(-0)  +i\langle\psi,v\rangle_{L_2}]\}. 
\end{equation}

 In this case, any complex--valued function from the space $L_2$ can serve as a nonlocal potential $v(x)$. The general case is considered in this paper. 
The detailed proofs of the self--adjointness of the momentum operators  with nonlocal potentials on the entire axis with single--point perturbations and operators on a finite interval with nonlocal potentials associated with boundary points  are given.  The resolvents of such operators for nonreal $z$ are constructed explicitly as integral operators whose kernels have a finite rank  perturbation of the Green's function of operators without nonlocal potentials.  Spectral results for such models are considered.
 It is shown  that such models are exact solvable models.

\section{Boundary value problem on the whole axis with  one point interaction}

Consider the simplest first-order differential operator $\mathfrak{D}\psi=i\frac{d\psi(x)}{dx}$ on the whole axis $-\infty<x<+\infty$ in the space $L_2(- \infty,+\infty)$, whose domain is the
 Sobolev space $W_2^1(-\infty,+\infty)$.
This operator is self-adjoint with an absolutely continuous spectrum on the whole axis $-\infty<\lambda<+\infty$ and with a resolvent $(\mathfrak{D}-zI)^{-1}$, which for nonreal $z$ is an integral operator:
\begin{equation}\label{2.1}
	(\mathfrak{D}-zI)^{-1}h=\int_{-\infty}^{+\infty}\,g_z(x-y)\,h(y)\,dy ,
\end{equation}
where
\begin{equation}\label{2.2}
	g_z(x)= i\sign(\Im z) \theta(-x  \Im z)e^{-izx}.
\end{equation}

Note that such a simple operator is the free momentum operator in quantum mechanics. The Green's function $g_z(x)$ of the form $(\ref{2.2})$ has the following simple properties:
\begin{equation}\label{2.3}
\begin{array}{c}
	\| g_z(\cdot)\|_{L_2}=\frac{1}{\sqrt{2|\Im z|}},\quad g_z(-0)=i\theta(\Im z),\quad g_z(+0)=-i\theta(-\Im z),\\
g_z(+0)+g_z(-0)=i\sign(\Im z),\quad g_z(+0)-g_z(-0)=-i, \quad \bar{g}_{\bar{z}}(x)=g_z(-x).
\end{array}
\end{equation}

The boundary value problem with a one point interaction at the point $x_0=0$ for the momentum operator $\mathfrak{D}$ requires consideration of the symmetric operator in the space $L_2$ of the form  $\mathfrak{D}_{\min}\psi(x)=i\frac{d\psi}{dx}$, $x\neq 0$, $x\in \Omega=(-\infty,+\infty) \setminus \{0\}$
with domain $W_2^1(\Omega)$ containing  functions $\psi(x)$ such that $\psi(-0)=0$ and $\psi(+0)=0$.
The adjoint operator of $\mathfrak{D}_{\min}$ is the operator $\mathfrak{D}_{\max}\psi(x)=i\frac{d\psi}{dx},$\,\,$x\neq 0$ and its domain is the whole space $W_2^1(\Omega).$ The symmetry of the operator $\mathfrak{D}_{\min}$ and the equality $\mathfrak{D}_{\max}=\mathfrak{D}_{\min}^{*}$ immediately follows from Green's formula:
\begin{equation}\label{2.4}
	\langle\mathfrak{D}_{\max}\psi, \varphi\rangle_{L_2}-\langle\psi,\mathfrak{D}_{\max}\varphi\rangle_{L_2}=i\{\psi(-0)\overline{\varphi(-0)}-\psi(+0)\overline{\varphi(+0)}\},\quad \psi,\varphi \in W_2^1(\Omega).
\end{equation}

\begin{theorem}\label{thm2.1}
  For the self-adjoint operator $A$ to be an extension of the symmetric operator $\mathfrak{D}_{\min}$, it is necessary and sufficient that the operator $A$ be the restriction of the operator $\mathfrak{D}_{\max}$ to the set:
	\begin{equation}\label{2.5}
		\{\psi: \psi \in W_2^1(\Omega),\,\,\psi(+0)=e^{i\alpha}\psi(-0)\},
	\end{equation}
	where $\alpha$  is a real number characterizing the operator $A$.
	Each value of $\alpha \in [0, 2\pi)$ defines a~self--adjoint extension of the operator $\mathfrak{D}_{\min}$ and it is denoted by $\mathfrak{D}_{\alpha}$. For nonreal $z$, the resolvent $(\mathfrak{D}_{\alpha}-zI)^{-1}$ is an integral operator:
	\begin{equation}\label{2.6}
		(\mathfrak{D}_{\alpha}-zI)^{-1}h(x)= \int\,\mathcal{G}_z(x,y;\alpha)\,h(y)\, dy,
	\end{equation}
	where the kernel $\mathcal{G}_z(x,y;\alpha)$ is expressed in terms of the Green's function $(\ref{2.2})$ as:
	\begin{equation}\label{2.7}
		\mathcal{G}_z(x,y,\alpha)=g_z(x-y)+g_z(x)\beta(z,\alpha)\bar{g}_{\bar{z}}(y)
	\end{equation}
	and the coefficient $\beta(z,\alpha)$ has the form
	\begin{equation}\label{2.8}
		\beta(z,\alpha)=i(1-e^{-i\alpha})\theta(\Im z)+i(e^{i\alpha}-1)\theta(-\Im z), \quad \overline{\beta}(\bar{z},\alpha)=\beta(z,\alpha).
	\end{equation}
\end{theorem}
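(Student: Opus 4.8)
The plan is to prove the statement in two stages: first to characterize which restrictions of $\mathfrak{D}_{\max}$ yield self-adjoint extensions of $\mathfrak{D}_{\min}$, and then to construct the resolvent explicitly by solving the inhomogeneous equation with the boundary condition built in. The engine for the first stage is Green's formula $(\ref{2.4})$. A restriction $A$ of $\mathfrak{D}_{\max}$ to a subspace $\mathcal{D}(A)\subset W_2^1(\Omega)$ is symmetric precisely when the boundary form
\[
B(\psi,\varphi)=\psi(-0)\overline{\varphi(-0)}-\psi(+0)\overline{\varphi(+0)}
\]
vanishes on $\mathcal{D}(A)$. First I would check that the condition $\psi(+0)=e^{i\alpha}\psi(-0)$ forces $B$ to vanish: substituting gives $\psi(-0)\overline{\varphi(-0)}\bigl(1-e^{i\alpha}\overline{e^{i\alpha}}\bigr)=0$, so $A$ restricted to $(\ref{2.5})$ is symmetric.

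For self-adjointness I would identify the adjoint domain $\mathcal{D}(A^{*})$ as those $\varphi\in W_2^1(\Omega)$ for which $B(\psi,\varphi)=0$ for all $\psi\in\mathcal{D}(A)$; inserting the boundary condition on $\psi$ and factoring out the free parameter $\psi(-0)$ forces $\varphi(+0)=e^{i\alpha}\varphi(-0)$, whence $\mathcal{D}(A^{*})=\mathcal{D}(A)$. To see that these exhaust \emph{all} self-adjoint extensions, I would compute the deficiency indices: solutions of $(\mathfrak{D}_{\max}-zI)u=0$ are multiples of $e^{-izx}$, which lie in $L_2(\Omega)$ on exactly one half-line according to the sign of $\Im z$, giving $n_{+}=n_{-}=1$. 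By von Neumann's theory the self-adjoint extensions then form a one-parameter family parametrized by the unitaries $e^{i\alpha}$ of a one-dimensional space; since the conditions $(\ref{2.5})$ already supply such a family of self-adjoint operators, they are all of them. This completeness step is the main obstacle, and the deficiency-index count is how I would clear it.

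For the resolvent I would take the ansatz $(\ref{2.7})$ and verify the three requirements for $\psi(x)=\int\mathcal{G}_z(x,y;\alpha)h(y)\,dy$ to equal $(\mathfrak{D}_{\alpha}-zI)^{-1}h$. Writing $u_0(x)=\int g_z(x-y)h(y)\,dy$ and using that $g_z$ is the free Green's function, one has $(\mathfrak{D}-zI)u_0=h$; since $g_z(x)$ solves the homogeneous equation for $x\neq0$, the added rank-one term does not disturb the equation away from the origin, and membership in $W_2^1(\Omega)$ is immediate from $(\ref{2.3})$. The key observation is that $u_0\in W_2^1(\R)$ is continuous at $0$, with $u_0(0)=\int g_z(-y)h(y)\,dy=\int\bar{g}_{\bar{z}}(y)h(y)\,dy$ by $(\ref{2.3})$. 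Thus $\psi=u_0+\beta\,u_0(0)\,g_z$, and the condition $\psi(+0)=e^{i\alpha}\psi(-0)$ collapses, via the explicit values $g_z(\pm0)$ in $(\ref{2.3})$, to a single scalar equation for $\beta$.

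Solving that scalar equation in the two cases $\Im z>0$ and $\Im z<0$ reproduces exactly $(\ref{2.8})$, and the reality relation $\overline{\beta}(\bar{z},\alpha)=\beta(z,\alpha)$ then follows by conjugating and swapping the two half-planes. Since $\mathfrak{D}_{\alpha}$ is self-adjoint, every nonreal $z$ lies in its resolvent set, so the constructed $\psi$ is the unique solution and hence is the resolvent. The bulk of this last stage is routine bookkeeping; the only nontrivial point is the continuity of $u_0$ at the origin, which is precisely what reduces the boundary condition to one linear equation whose solution matches $(\ref{2.8})$ after separating the sign of $\Im z$.
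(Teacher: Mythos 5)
Your proposal is correct, and on the resolvent it follows essentially the paper's route: both arguments reduce the problem to a linear condition at the origin using the explicit boundary values $g_z(\pm 0)$ from $(\ref{2.3})$. The paper writes the general solution of $(\mathfrak{D}_{\max}-z)\psi=h$ in the jump form $(\ref{2.9})$ and solves the $2\times 2$ system $(\ref{2.10})$ for $\psi(\pm 0)$, whereas you posit the ansatz $(\ref{2.7})$, use the continuity of $u_0=g_z*h$ at $0$ with $u_0(0)=\langle h,g_{\bar z}\rangle$, and solve one scalar equation for $\beta$ in each half-plane; these are the same computation differently parametrized, and your values of $\beta$ check out against $(\ref{2.8})$. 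Where you genuinely diverge is the first half of the theorem: the paper's proof only invokes Green's formula for symmetry and lets the existence of the resolvent at nonreal $z$ carry the self-adjointness, and it never addresses the ``necessary'' direction (that \emph{every} self-adjoint extension of $\mathfrak{D}_{\min}$ has domain $(\ref{2.5})$). You supply that missing piece via the adjoint-domain computation $\mathcal{D}(A^*)=\mathcal{D}(A)$ and the deficiency-index count $n_+=n_-=1$, which is the right tool and makes your argument more complete than the published one.

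One step deserves tightening: ``the self-adjoint extensions form a one-parameter family parametrized by unitaries of a one-dimensional space; since $(\ref{2.5})$ already supplies such a family, they are all of them'' is not by itself a valid inference, since an injection of the circle of parameters $e^{i\alpha}$ into a set in bijection with $U(1)$ need not be onto. The repair is routine: take explicit deficiency elements, say $\phi_+=\theta(-x)e^{x}$ for $z=i$ and $\phi_-=\theta(x)e^{-x}$ for $z=-i$, note that each von Neumann domain is $\mathcal{D}(\mathfrak{D}_{\min})\dotplus\mathbb{C}(\phi_++u\phi_-)$ with $|u|=1$, and read off from $\phi_+(-0)=\phi_-(+0)=1$ that such a $\psi$ satisfies exactly $\psi(+0)=u\,\psi(-0)$, so $u=e^{i\alpha}$ and every extension is of the form $(\ref{2.5})$. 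With that one sentence added, your proof is complete.
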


\begin{proof}
	The symmetry of the operator $\mathfrak{D}_{\alpha}$ follows from the Green's formula $(\ref{2.4})$.  To construct the kernel $\mathcal{G}_z(x,y,\alpha)$, it suffices to solve the equation
	\begin{equation}\label{2.9}
		\psi(x)=g_z(x)i[\psi(+0)-\psi(-0)]+\int\,g_z(x-y)\,h(y)\,dy
	\end{equation}
	for any $h \in L_2$ satisfying the boundary condition $\psi(+0)=e^{i\alpha}\psi(-0).$ From the equation $(\ref{2.9})$ and the boundary conditions, we obtain a linear system for $\psi(+0)$ and $\psi(-0)$:
	\begin{equation}\label{2.10}
		\begin{array}{ll}
			\frac{1}{2}[\psi(+0)-\psi(-0)]=\frac{1}{2}\sign(\Im z)[\psi(+0)-\psi(-0)]+\langle h,g_{\bar{z}}\rangle,\\[2mm]
			\psi(+0)- e^{i\alpha}\psi(-0)=0.
		\end{array}
	\end{equation}
	If we substitute the solution of the system $(\ref{2.10})$ into the equation $(\ref{2.9})$, then we get
	\begin{equation}\label{2.11}
		\psi(x)=\int\,\mathcal{G}_z(x,y;\alpha) h(y)\,dy,
	\end{equation}
where the kernel has the form $(\ref{2.7})$. It is easy to verify that the solution $\psi$ of the form $(\ref{2.11})$, with the kernel $\mathcal{G}_z(x,y,\alpha)$, satisfies the equation $ (\mathfrak{D}_{\alpha}-zI)^{-1}\psi=h$.
\end{proof}
\begin{remark}\label{2.1}
The Green's function $\mathcal{G}_z(x,y;\alpha)$ is related to the Green's function $g_z(x)$ by the equality $(\ref{2.7})$
 Therefore, due to  (\ref{2.8})
and  (\ref{2.3}),
 the function
$$E_1(x,z)=\mathcal{G}_z(x,-0;\alpha)=g_z(x)w(z,\alpha)
$$
where $w(z,\alpha)=\theta(\Im z)+e^{i\alpha}\theta(-\Im z)$.
Therefore, $E_1(-0,z)-e^{-i\alpha}E_1(+0,z)=i.$ Due to the boundary conditions (\ref{2.5})
$$
\begin{array}{l}
  \displaystyle \frac{1}{2}\Bigl(\mathcal{G}_z(-0,y;\alpha)+e^{-i\alpha}\mathcal{G}_z(+0,y;\alpha) \Bigr)=\mathcal{G}_z(-0,y;\alpha), \\[2mm]
 \displaystyle   \frac{1}{2}\Bigl(E_1(-0,z)+e^{-i\alpha}E_1(+0,z)\Bigr)=\frac{i}{2}\sign(\Im z).
\end{array}
$$
\end{remark}
The operators $\mathfrak{D}_{\alpha}$ in Theorem $\ref{thm2.1}$ are the models of point interactions  for momentum operators at the point $x_0=0$.
\begin{remark}\label{2.2}
Self--adjoint operators $\mathfrak{D}_{\alpha}$ with $\alpha \in [0,2\pi)$ are unitary equivalent.
\end{remark}
\begin{proof}
The operator $\mathfrak{D}_0$ for $\alpha=0$ is a self-adjoint operator with the domain $W_2^1(R^1)$. This operator is given by the differential operator $\mathfrak{D}_0=i\frac{d}{dx}.$ The operator $\mathfrak{D}_{\alpha}$ is related to the operator $\mathfrak{D}_0$ by the equality $\mathfrak{D}_{\alpha}=U_{\alpha}^* \mathfrak{D}_0U_{\alpha}$ where $U_{\alpha}$ is a unitary operator in the space $ L_2(-\infty,+\infty)$.
    The operator $U_{\alpha}$ is an operator of multiplication by the function $\chi_{\alpha}(x)=1$ for $x>0$ and $\chi_{\alpha}(x)=e^{-i\alpha x}$ for $x<0$. If the function $\psi_{\alpha}(x)$ belongs to the domain of the operator $\mathfrak{D}_{\alpha}$, then $U_{\alpha} \psi_{\alpha} \in W_2^1(R^1)$ and the equality $\mathfrak{D}_0U_{\alpha}\psi_{\alpha}$ makes sense. The operator $U_{\alpha}^* $  maps the function $\mathfrak{D}_0U_{\alpha} \psi_{\alpha}$ into $\mathfrak{D}_{\alpha} \psi_{\alpha}$.
\end{proof}
Note that Remark \ref{2.2} is valid only for the first-order differential operators. For Schr\"{o}dinger operators, similar operators are different for different $\alpha$ and they describe point interactions \cite{Albe_book}.
\newpage

\section{Momentum operator with nonlocal potentials}
 Let us now extend the constructed model using nonlocal potentials.
\begin{definition}\label{def2.1}
	In the space $L_2(-\infty,+\infty)$, the momentum operator with a local interaction in the point $x_0=0$ and two nonlocal potentials $v_1(x)$ and $v_2(x)$, which are complex-valued functions from the space $ L_2$, is defined on all functions from the Sobolev space $W_2^1(\Omega)$ by the expression:
	\begin{equation}\label{2.29}
		A_{\max}\psi=\mathfrak{D}_{\max}\psi(x)+v_1(x)[\psi(-0)+\frac{i}{2}\langle\psi,v_1\rangle _{L_2}]+v_2(x)[\psi(+0)-\frac{i}{2}\langle\psi,v_2\rangle_{L_2}],\,\,x\neq 0.
	\end{equation}
	The restriction of the operator $A_{\max}$ to the domain
	\begin{equation}\label{2.210}
		\mathfrak{D}_{(v_1,v_2,\alpha)}=\{ \psi: \psi \in W_2^1(\Omega), \psi(+0)-i\langle\psi,v_2\rangle_{ L_2}=e^{i\alpha}[\psi(-0)+i\langle\psi,v_1\rangle_{L_2}] \}
	\end{equation}
	is denoted by
$A(v_1,v_2,\alpha)$, where $\alpha$ is a real number.
\end{definition}

\begin{lemma}\label{thm2.2}
	The momentum operators  $A(v_1,v_2,\alpha)$
with nonlocal potentials are symmetric operators in the space $L_2(-\infty,+\infty)$.
\end{lemma}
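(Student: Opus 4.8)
The plan is to verify directly that $A(v_1,v_2,\alpha)$ is Hermitian on its domain, i.e. that $\langle A\psi,\varphi\rangle_{L_2}=\langle\psi,A\varphi\rangle_{L_2}$ for all $\psi,\varphi\in\mathfrak{D}_{(v_1,v_2,\alpha)}$; density of this domain in $L_2$ is standard, since the single boundary relation in (\ref{2.210}) defines a linear functional that is unbounded in the $L_2$-norm, so its kernel is dense. First I would split $A_{\max}$ as in (\ref{2.29}) into the differential part $\mathfrak{D}_{\max}$ and the two rank-one nonlocal terms, and compute the difference $\langle A\psi,\varphi\rangle_{L_2}-\langle\psi,A\varphi\rangle_{L_2}$ term by term. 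For the differential part I apply Green's formula (\ref{2.4}); the nonlocal terms, after moving the scalar factors $\psi(\mp0)$ and $\langle\psi,v_j\rangle$ outside the inner products and using $\overline{\langle\varphi,v_j\rangle}=\langle v_j,\varphi\rangle$, produce a collection of bilinear expressions in the boundary values $\psi(\pm0),\varphi(\pm0)$ and the pairings $\langle\psi,v_j\rangle,\langle\varphi,v_j\rangle$.

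The heart of the argument is to recognise that everything reassembles into a pure boundary form. Introducing the ``effective'' boundary values
$$\Psi_-=\psi(-0)+i\langle\psi,v_1\rangle_{L_2},\qquad \Psi_+=\psi(+0)-i\langle\psi,v_2\rangle_{L_2},$$
and $\Phi_\pm$ defined analogously from $\varphi$, I claim the whole difference collapses to
$$\langle A\psi,\varphi\rangle_{L_2}-\langle\psi,A\varphi\rangle_{L_2}=i\bigl(\Psi_-\overline{\Phi_-}-\Psi_+\overline{\Phi_+}\bigr).$$
Verifying this is the routine but delicate step: expanding the right-hand side reproduces exactly the term $i\{\psi(-0)\overline{\varphi(-0)}-\psi(+0)\overline{\varphi(+0)}\}$ coming from Green's formula, the four ``cross'' terms that are linear in one boundary value and one pairing, and the two ``quadratic'' terms $i\langle\psi,v_1\rangle\langle v_1,\varphi\rangle-i\langle\psi,v_2\rangle\langle v_2,\varphi\rangle$. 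The symmetric choice of the coefficients $\pm\tfrac{i}{2}$ in the definition (\ref{2.29}) is precisely what forces the two half-contributions $\tfrac{i}{2}$ of each quadratic term (one from the $\langle A\psi,\varphi\rangle$ side, one from the $-\langle\psi,A\varphi\rangle$ side via the conjugate of the $\varphi$-coefficient) to add up to the full $\pm i$; this matching is the main place where a sign or conjugation slip would break the proof.

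Finally I would close using the boundary conditions. By (\ref{2.210}), $\psi$ satisfies $\Psi_+=e^{i\alpha}\Psi_-$, and conjugating the corresponding relation for $\varphi$ (again with $\overline{\langle\varphi,v_j\rangle}=\langle v_j,\varphi\rangle$ and $\overline{e^{i\alpha}}=e^{-i\alpha}$) gives $\overline{\Phi_+}=e^{-i\alpha}\overline{\Phi_-}$. Hence $\Psi_+\overline{\Phi_+}=e^{i\alpha}e^{-i\alpha}\Psi_-\overline{\Phi_-}=\Psi_-\overline{\Phi_-}$, so the bracket in the displayed identity vanishes and $\langle A\psi,\varphi\rangle_{L_2}=\langle\psi,A\varphi\rangle_{L_2}$, establishing symmetry. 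The only genuine obstacle is the bookkeeping of complex conjugates in the nonlocal pairings together with the verification of the collapse identity; once that identity is in hand, the cancellation through the $e^{\pm i\alpha}$ factors is immediate and mirrors the local case of Theorem \ref{thm2.1}.
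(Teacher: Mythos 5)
Your argument is correct, and its core coincides with the paper's: the collapse identity you state for the difference $\langle A\psi,\varphi\rangle_{L_2}-\langle\psi,A\varphi\rangle_{L_2}$ in terms of the effective boundary values $\Psi_\pm,\Phi_\pm$ is exactly the Green's formula (\ref{2.211}), and the cancellation $\Psi_+\overline{\Phi_+}=e^{i\alpha}e^{-i\alpha}\Psi_-\overline{\Phi_-}$ via the boundary condition (\ref{2.210}) is the paper's closing step; I checked that the $\pm\tfrac{i}{2}$ bookkeeping in your expansion does reproduce the quadratic terms $ip_1\overline{q_1}-ip_2\overline{q_2}$ correctly. Where you genuinely diverge is the density of the domain, which the paper treats as the nontrivial part of the proof. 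You dispose of it abstractly: the boundary relation is the kernel of a linear functional on $W_2^1(\Omega)$ that is discontinuous in the $L_2$-norm (because the point evaluations $\psi(\pm0)$ are, while the pairings $\langle\psi,v_j\rangle$ are bounded), and the kernel of a discontinuous functional on a dense subspace is dense. That is a valid and shorter route, though you should at least record the witness of unboundedness (e.g. $\psi_n$ supported on $(0,\infty)$ with $\|\psi_n\|_{L_2}=1$ and $\psi_n(+0)=n$). The paper instead constructs explicit approximants $\psi_n=\psi_0+C_n^{(+)}\delta_n^{(+)}+C_n^{(-)}\delta_n^{(-)}$ and solves the $2\times2$ system (\ref{sys}) for the constants; this is longer but yields the stronger statement that even the domain of $A_{\min}$, cut out by \emph{two} boundary functionals, is dense --- a fact your single-functional argument does not immediately give and which is the natural input if one later wants to discuss $A_{\min}$ and its adjoint. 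For the lemma as stated, your version suffices.
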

\begin{proof}
	 The operator $A_{\max}$\,  (\ref{2.29})
satisfies the following Green's formula:
	\begin{equation}\label{2.211}
		\begin{array}{ll}
			\langle A_{\max}\psi, \varphi\rangle_{L_2}-\langle\psi, A_{\max}\varphi\rangle_{L_2}=i\{[\psi(-0)+i\langle\psi, v_1\rangle_{L_2}]\cdot[\varphi(-0)+i\langle\varphi,v_1\rangle_{L_2}]^*-\\ [2mm]
			-[ \psi(+0)-i\langle\psi,v_2\rangle_{L_2}] \cdot[ \varphi(+0)-i\langle\varphi,v_2\rangle_{L_2}]^{*}\}
		\end{array}
	\end{equation}
for any functions $\psi,\,\varphi \in W_2^1(\Omega)$.
	If we assume  in this Green's formula that the functions $\psi$ and $\varphi$ belong to $\mathfrak{D}_{(v_1,v_2,\alpha)}$ of $(\ref{2.210})$, then in the left--hand side of the equation $(\ref{2.211})$, we can write $A(v_1,v_2,\alpha)$
 instead of the operator $A_{\max}$. Using the boundary conditions of  $(\ref{2.210})$, we~obtain that the right--hand side of the equation $(\ref{2.211})$ is equal to zero. Then the operator $A(v_1,v_2,\alpha)$
  is~symmetric if the domain of this operator is dense in the space $L_2(-\infty,+\infty)$.
 Let us show that even the domain of  the  operator $A_{\min}:$
\begin{equation}\label{dom}
  \{\psi:\psi \in W_2^1(\Omega), \psi(+0)-i\langle\psi,v_2\rangle_{ L_2}=0,\, \psi(-0)+i\langle\psi,v_1\rangle_{L_2}=0\} \subset \mathfrak{D}_{(v_1,v_2,\alpha)}
\end{equation}
is dense in the  space $L_2(-\infty,+\infty)$. It is well--known that the set of all compactly supported continuously differentiable functions  $\{\psi_0\}\in W_2^1(-\infty,+\infty)$
such that $ \psi_0(0) = 0$
is dense  in the  space $L_2(-\infty,+\infty)$. Let us show that such functions $\psi_0(x)$ can be approximated as close as possible by functions from the domain of the operator $A_{\min}:$
$$||\psi_0-\psi_n||\rightarrow 0,\quad \,n\rightarrow \infty.
$$
 This means that the domain of the operator  $A_{\min}$  and, therefore,  the domain of the operator $A(v_1,v_2,\alpha)$  are dense in the space $L_2(-\infty,+\infty)$.

In order to prove this  we construct two functions:
\begin{equation*}
  \delta^{(+)}(x)=
  \begin{cases}
   \begin{array}{l}
                 (1-x)^2,\,\,\, x\in [0,1], \\[2mm]
                 0,\quad \quad \quad  \,\,\,  x \overline{\in} [0,1],
               \end{array}
                \end{cases}
\end{equation*}
and
\begin{equation*}
  \delta^{(-)}(x)=
    \begin{cases}
  \begin{array}{l}
                 (1+x)^2,\,\,\, x\in [-1,0], \\[2mm]
                 0,\quad \quad \quad \,\,\,  x \overline{\in} [-1,0].
               \end{array}
                \end{cases}
\end{equation*}
Then we define  functions $\delta_n^{(+)}(x)= \delta^{(+)}(nx)$ and $\delta_n^{(-)}(x)= \delta^{(-)}(nx)$,\,\,$n\in N$. Now consider the following functions:
\begin{equation*}
  \psi_n(x)=\psi_0(x)+C_n^{(+)}\delta_n^{(+)}(x)+C_n^{(-)}\delta_n^{(-)}(x).
\end{equation*}
These functions $\psi_n(x)$ belong to the space $ W_2^1(\Omega)$, and with appropriately chosen constants $C_n^{(+)}$ and $C_n^{(-)}$,  they belong to the domain  of the operator  $A_{\min}$. Thus, substituting the function $ \psi_n(x)$  into  two conditions of the formula (\ref{dom}), we get   a linear system:
\begin{equation}\label{sys}
  \begin{cases}
   \begin{array}{l}
   C_n^{(+)}-i[\langle\psi_0,v_2\rangle+  C_n^{(+)} \langle \delta_n^{(+)},v_2\rangle+C_n^{(-)} \langle\delta_n^{(-)},v_2\rangle]=0,\\[2mm]
    C_n^{(-)}+i[\langle\psi_0,v_1\rangle+  C_n^{(+)} \langle \delta_n^{(+)},v_1\rangle+C_n^{(-)} \langle\delta_n^{(-)},v_1\rangle]=0.
  \end{array}
                \end{cases}
\end{equation}
 Since $\displaystyle| \langle \delta_n^{(+)},v_j\rangle|\leq\frac{1}{\sqrt{5n}}||v_j||$ and  $\displaystyle| \langle \delta_n^{(-)},v_j\rangle|\leq\frac{1}{\sqrt{5n}}||v_j||$,\,\,$j=1,2$, then
 the system (\ref{sys}) is uniquely solvable for sufficiently large values of $n$. In addition, $\displaystyle C_n^{(+)}=i\langle\psi_0,v_2\rangle+O\Bigl(\frac{1}{\sqrt{n}}\Bigr)$  and
  $\displaystyle C_n^{(-)}=-i\langle\psi_0,v_1\rangle+O\Bigl(\frac{1}{\sqrt{n}}\Bigr)$ as $n\rightarrow\infty$.
Therefore,  $ |C_n^{(+)}|$ and  $ |C_n^{(-)}|$ are bounded by some constant $C$ as $n\rightarrow\infty$. This shows that
$$\displaystyle ||\psi_0-\psi_n||\leq \frac{C(||v_1||+||v_2||)}{\sqrt{n}}\rightarrow 0,\quad \,n\rightarrow \infty.
$$
 Thus, the domain  of the operator  $A_{\min}$ and, consequently, the domain  of the operator $A(v_1,v_2,\alpha)$ are dense in the space $L_2(-\infty,+\infty)$.
 And these operators  are symmetric in $L_2(-\infty,+\infty)$.
\end{proof}

From Definition $\ref{def2.1}$ of the operators $A(v_1,v_2,\alpha)$
 it follows that two operators with different nonlocal potentials can have the same domains and even coincide.

\begin{theorem}\label{thm2.3}
	A necessary and sufficient conditions for the operators
 $A(v_1,v_2,\alpha)$
 and
  $A(\hat{v}_1,\hat{v}_2,\alpha)$
to  have the same domains are the following:
	\begin{equation}\label{2.12}
		v_1(x)+e^{i\alpha}v_2(x)=\hat{v}_1(x)+e^{i\alpha}\hat{v}_2(x) \overset{def}=v (x).
	\end{equation}
	The difference of two operators with the same domains
	\begin{equation}\label{2.13}
A(v_1,v_2,\alpha) -A(v, 0,\alpha)=K(v_1,v_2,\alpha)
	\end{equation}
	is an integral operator of rank at most $2$
	\begin{equation}\label{2.14}
K(v_1,v_2,\alpha)\psi=\frac{i}{2}e^{i\alpha}v_2(x)\langle\psi,v_1\rangle-\frac{i}{2}e^ {-i\alpha}v_1(x)\langle\psi,v_2\rangle.
	\end{equation}
	The difference of two operators
$A(v_1,v_2,\alpha) -A(\hat{v}_1,\hat{v}_2,\alpha)$
with nonlocal potentials under the condition $(\ref{2.12})$ is an integral operator of rank at most $4$. These integral operators are bounded and symmetric.
\end{theorem}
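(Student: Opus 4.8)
The plan is to reduce the entire statement to a single observation: the boundary condition defining $\mathfrak{D}_{(v_1,v_2,\alpha)}$ in $(\ref{2.210})$ depends on the pair $(v_1,v_2)$ only through the combination $v=v_1+e^{i\alpha}v_2$. First I would rewrite that relation by collecting the inner-product terms. Moving everything to one side, the condition $\psi(+0)-i\langle\psi,v_2\rangle=e^{i\alpha}[\psi(-0)+i\langle\psi,v_1\rangle]$ becomes $\psi(+0)-e^{i\alpha}\psi(-0)=i\langle\psi,v_2\rangle+ie^{i\alpha}\langle\psi,v_1\rangle$, and using the anti-linearity of $\langle\cdot,\cdot\rangle$ in its second slot the right-hand side equals $ie^{i\alpha}\langle\psi,v_1+e^{i\alpha}v_2\rangle=ie^{i\alpha}\langle\psi,v\rangle$. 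Thus the domain is exactly the kernel in $W_2^1(\Omega)$ of the functional $\ell_v(\psi)=\psi(+0)-e^{i\alpha}\psi(-0)-ie^{i\alpha}\langle\psi,v\rangle$, which manifestly depends only on $v$; sufficiency of $(\ref{2.12})$ is then immediate.

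For necessity I would argue that $\ker\ell_v=\ker\ell_{\hat v}$ forces $v=\hat v$. Two nonzero continuous functionals share a kernel iff they are proportional, say $\ell_v=c\,\ell_{\hat v}$. Testing on functions $\psi$ with $\psi(\pm0)=0$ — a class already shown dense in $L_2$ in the proof of Lemma~\ref{thm2.2} — gives $\langle\psi,v-c\hat v\rangle=0$ on a dense set, hence $v=c\hat v$; testing on a sequence concentrated at $0^{+}$ with $\psi_n(+0)=1$, $\psi_n(-0)=0$ and vanishing $L_2$-pairings then yields $\ell_v(\psi_n),\ell_{\hat v}(\psi_n)\to 1$ and so $c=1$. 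The only point requiring care is the independence of the boundary point-evaluations from the bulk functional $\langle\cdot,v\rangle$, which is precisely what these two test families supply.

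Next I would compute the difference $(\ref{2.13})$. Since $v+e^{i\alpha}\cdot 0=v$, the operators $A(v_1,v_2,\alpha)$ and $A(v,0,\alpha)$ share a common domain, and subtracting the two copies of $(\ref{2.29})$ cancels the $\mathfrak{D}_{\max}$ part. The surviving point-evaluation terms combine into $v_2[\psi(+0)-e^{i\alpha}\psi(-0)]$, which on the common domain I replace by $ie^{i\alpha}v_2\langle\psi,v\rangle$ via the rewritten boundary condition; the surviving rank-one terms I expand by substituting $v=v_1+e^{i\alpha}v_2$ together with $\langle\psi,v\rangle=\langle\psi,v_1\rangle+e^{-i\alpha}\langle\psi,v_2\rangle$. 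After cancellation the result collapses to $(\ref{2.14})$. This bookkeeping — keeping the $e^{\pm i\alpha}$ factors and the anti-linear conjugations straight — is the main obstacle; everything else is formal.

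Finally, the remaining claims follow cheaply. Writing $A(v_1,v_2,\alpha)-A(\hat v_1,\hat v_2,\alpha)=K(v_1,v_2,\alpha)-K(\hat v_1,\hat v_2,\alpha)$ by telescoping through $A(v,0,\alpha)$ exhibits it as a difference of two operators of rank $\le 2$, hence of rank $\le 4$. Each summand $\langle\psi,v_j\rangle v_k$ is the integral operator with kernel $v_k(x)\overline{v_j(y)}$, so both differences are integral operators, and boundedness is the standard estimate $\|\langle\cdot,g\rangle f\|\le\|f\|\,\|g\|$ for finite-rank operators with $L_2$ factors. For symmetry I would either verify $\langle K\psi,\varphi\rangle=\langle\psi,K\varphi\rangle$ directly — the $\tfrac{i}{2}e^{i\alpha}$ and $-\tfrac{i}{2}e^{-i\alpha}$ terms interchange under conjugation — or simply note that $K$ is the difference of the symmetric operators of Lemma~\ref{thm2.2} on their common dense domain, so the bounded, everywhere-defined operator $(\ref{2.14})$ agreeing with it there is self-adjoint.
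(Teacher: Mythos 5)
Your proposal is correct and follows essentially the same route as the paper: rewrite the boundary condition as $\psi(+0)-e^{i\alpha}\psi(-0)=ie^{i\alpha}\langle\psi,v\rangle$ so that the domain depends only on $v=v_1+e^{i\alpha}v_2$, use this condition to eliminate the boundary values and obtain $K(v_1,v_2,\alpha)$ in the form $(\ref{2.14})$, and telescope through $A(v,0,\alpha)$ for the rank-at-most-$4$ claim. The only additions are that you actually supply the necessity direction (two nonzero continuous functionals with a common kernel are proportional, with the two test families separating the point evaluations from $\langle\cdot,v\rangle$) and the explicit symmetry check of $K$, both of which the paper leaves implicit.
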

\begin{proof}
	The domain of the operators
$A(v_1,v_2,\alpha)$
 is determined by the boundary condition (\ref{2.210}) and can be rewritten as follows:
	\begin{equation}\label{2.14}
		\psi(+0)-e^{i\alpha}\psi(-0)=i[e^{i\alpha}\langle\psi,v_1\rangle+\langle\psi,v_2\rangle]
	\end{equation}
	and  respectively  for the operator
   $A(\hat{v}_1,\hat{v}_2,\alpha)$.
   Let us determine the value $\psi(+0)$ of the boundary condition $(\ref{2.14})$ and substitute it into the definition of the operator
   $A(v_1,v_2,\alpha)$.
  We obtain
	\begin{equation}
A(v_1,v_2,\alpha)\psi=A(v, 0,\alpha)\psi+K(v_1,v_2,\alpha)\psi,
	\end{equation}
	where
$K(v_1,v_2,\alpha)$
 is a bounded symmetric operator of rank at most $2$. Since this is also true for the operator
  $A(\hat{v}_1,\hat{v}_2,\alpha )$,
  then
   $A(v_1,v_2,\alpha)- A(\hat{v}_1,\hat{v}_2,\alpha)$
   is a difference of two symmetric operators. Therefore, this is a bounded symmetric operator of rank at most~$4$.
\end{proof}
\begin{remark}\label{rem2.1}
	The fact that operators with nonlocal potentials and the same domain  can differ by a bounded symmetric operator shows that the proof of the self-adjointness of an arbitrary operator
$A(v_1,v_2,\alpha)$
can be reduced to proving the self-adjointness of the operators
$A(v,0,\alpha)$
 or
  $A(\frac{1}{2}v,\frac{ e^{-i\alpha}}{2}v, \alpha)$,
 where $v=v_1+e^{i\alpha}v_2$ and the operators
  $A(v,0,\alpha)$,\, $A(\frac{1}{2}v,\frac{e^{-i\alpha}}{2}v, \alpha)$
  coincide.
\end{remark}

Indeed, due to Theorem $\ref{thm2.3}$ we have
$$A(\frac{1}{2}v,\frac{e^{-i\alpha}}{2}v, \alpha)-A(v,0,\alpha)=K(\frac{1}{2}v,\frac{e^{-i\alpha}}{2}v, \alpha)\equiv 0.$$
We will denote the operator
$A(\frac{1}{2}v,\frac{e^{-i\alpha}}{2}v, \alpha)$ by
$A(v,\alpha). $
Its action is expressed in  the form
\begin{equation}\label{2.16}
A(v,\alpha)\psi= \mathfrak{D}_{\max}\psi+v(x)\left[\frac{1}{2}\psi(-0)+\frac{e^ {-i\alpha}}{2} \psi(+0)\right]
\end{equation}
with the boundary condition
\begin{equation}\label{2.17}
	i\psi(-0)-ie^{-i\alpha}\psi(+0)=\langle\psi,v\rangle.
\end{equation}

\begin{theorem}\label{thm2.4}
	For any potential $v \in L_2$ and a real number $\alpha$, the operator
  $A(v,\alpha)$
  of the form $(\ref{2.16})$--$(\ref{2.17})$ is self-adjoint in the space $ L_2$. For nonreal $z$, the resolvent
   $(A(v,\alpha)-zI)^{-1}$
   is an~integral operator with the kernel $\mathcal{G}_{z}(x,y;v, \alpha )$
	\begin{equation}\label{2.18}
(A(v,\alpha)-zI)^{-1}h(z)=\int\,\mathcal{G}_{z}(x,y;v, \alpha)h(y) \,dy.
	\end{equation}
The kernel $\mathcal{G}_{z}(x,y;v, \alpha)$ is a rank at most $2$ perturbation of the kernel  $\mathcal{G}_z(x,y;\alpha)$ of the free momentum operator  in (\ref{2.7}) (see Theorem \ref{thm2.1}):
\begin{equation}\label{2.19}
\displaystyle \mathcal{G}_z(x,y;v,\alpha)=\mathcal{G}_z(x,y;\alpha)+\frac{1}{\gamma(z)}\sum\limits_{j ,k=1}^{2}\,E_j(x,z)\gamma_{jk}(z)\bar{E}_k(y,\bar{z}),
\end{equation}
 where $E_1(x,z)=\mathcal{G}_z(x,-0;\alpha),$\,\,$E_2(x,z)=\int\, \mathcal{G}_z(x,y ;\alpha)v(y)\,dy.$ The parameters $\gamma_{jk}(z)$ and $\gamma(z)$ are defined by the formulas:
\begin{equation}\label{2.20}
	\begin{array}{ll}
\displaystyle \gamma_{11}(z)=-\langle E_2(\cdot,z),v(\cdot)\rangle_{L_2}=-\int\int\,\mathcal{G}_z(x,y;\alpha )v(y)\overline{v(x)}\,dy\,dx, \\[2mm]
\displaystyle \gamma_{12}(z)=1+\langle v(\cdot),E_1(\cdot,\overline{z})\rangle_{L_2}=1+\int \, \overline{\mathcal{G} _{\bar{z}}}(-0,y;\alpha)v(y)\,dy, \\[2mm]
 \displaystyle\gamma_{21}(z)=1+\langle E_1(\cdot,z),v(\cdot)\rangle_{L_2}=1+\int\, \mathcal{G}_z(x,-0;\alpha)\overline{v(x)}\,dx, \\[2mm]
\displaystyle \gamma_{22}(z)=\displaystyle -\frac{i}{2} \sign(\Im z),\\[2mm]
\displaystyle \gamma(z)=\gamma_{11}(z) \gamma_{22}(z)-\gamma_{12}(z) \gamma_{21}(z)=\\[2mm]
\displaystyle =\displaystyle \frac{i}{2} \sign (\Im z) \cdot\langle E_2(\cdot,z ),v(\cdot)\rangle_{L_2}-(1+\langle E_1(\cdot,z),v(\cdot)\rangle_{L_2})(1+\langle v(\cdot),E_1(\cdot ,\overline{z})\rangle_{L_2}).
	\end{array}
\end{equation}
The matrix $\Gamma(z)=\|\gamma_{jk}(z)\|$ has the property $\Gamma^*(\overline{z})=\Gamma(z),$ and $\gamma(z) =\det \Gamma$ has the property
$\overline{\gamma}(\overline{z})=\gamma(z)\neq 0$.
\end{theorem}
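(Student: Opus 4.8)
The plan is to prove self-adjointness by explicitly inverting $A(v,\alpha)-zI$ for every nonreal $z$. Symmetry is already in hand from Lemma~\ref{thm2.2}, and by Remark~\ref{rem2.1} it suffices to treat the normalized operator $A(v,\alpha)$ of the form $(\ref{2.16})$--$(\ref{2.17})$. For a densely defined symmetric operator, surjectivity of $A(v,\alpha)-zI$ onto $L_2$ for one $z$ in each open half-plane forces both deficiency indices to vanish, hence self-adjointness; so the whole statement follows once the resolvent is constructed and shown to be everywhere defined and bounded.

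First I would reduce the equation $(A(v,\alpha)-zI)\psi=h$ to a finite linear system. Inserting the action $(\ref{2.16})$ gives $(\mathfrak{D}_{\max}-zI)\psi=h-v(\cdot)\,p$, with the scalar $p=\tfrac{1}{2}\psi(-0)+\tfrac{e^{-i\alpha}}{2}\psi(+0)$. Applying the free representation $(\ref{2.9})$ with right-hand side $h-pv$ expresses $\psi$ through $g_z$, the jump $i[\psi(+0)-\psi(-0)]$, and the two convolutions $\int g_z(x-y)h(y)\,dy$ and $\int g_z(x-y)v(y)\,dy$. The only unknowns are then two scalars — conveniently the jump and the pairing $\langle\psi,v\rangle$, equivalently the boundary values $\psi(\pm0)$. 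Imposing the boundary condition $(\ref{2.17})$ together with the self-consistency of the representation (evaluating at $x=\pm0$ and pairing with $v$), and rewriting every free kernel in terms of $\mathcal{G}_z(\cdot,\cdot;\alpha)$ via Theorem~\ref{thm2.1} and Remark~\ref{2.1} (using $E_1(x,z)=\mathcal{G}_z(x,-0;\alpha)=g_z(x)w(z,\alpha)$ and $E_2(x,z)=\int\mathcal{G}_z(x,y;\alpha)v(y)\,dy$), the resulting $2\times 2$ coefficient matrix is exactly $\Gamma(z)=\|\gamma_{jk}(z)\|$ of $(\ref{2.20})$. Solving by Cramer's rule and substituting back reproduces the kernel $(\ref{2.19})$, whose correction is the advertised rank at most $2$ term built from $E_1,E_2$; boundedness is automatic since $E_1,E_2\in L_2$ (by $(\ref{2.3})$ and $E_2=(\mathfrak{D}_\alpha-zI)^{-1}v$) and the leading term $\mathcal{G}_z(\cdot,\cdot;\alpha)$ is already a bounded resolvent.

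The symmetry relation $\Gamma^*(\bar z)=\Gamma(z)$ I would verify entrywise from $(\ref{2.20})$: using the Hermitian symmetry $\overline{\mathcal{G}_{\bar z}(x,y;\alpha)}=\mathcal{G}_z(y,x;\alpha)$ of the self-adjoint resolvent (which is encoded in $\bar g_{\bar z}(x)=g_z(-x)$ and $\bar\beta(\bar z,\alpha)=\beta(z,\alpha)$ from $(\ref{2.3})$--$(\ref{2.8})$) one checks $\overline{\gamma_{11}(\bar z)}=\gamma_{11}(z)$, $\overline{\gamma_{22}(\bar z)}=\gamma_{22}(z)$, and $\overline{\gamma_{21}(\bar z)}=\gamma_{12}(z)$. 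Taking determinants then yields $\overline{\gamma}(\bar z)=\gamma(z)$ at once.

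The main obstacle is the nonvanishing $\gamma(z)\neq0$ for $\Im z\neq0$, since this is precisely the solvability of the linear system and hence the existence of the resolvent. I would obtain it from symmetry rather than by a direct computation: if $\gamma(z)=\det\Gamma(z)=0$, the homogeneous system ($h=0$) has a nontrivial solution, and the representation $(\ref{2.9})$ reconstructs from it a nonzero $\psi$ in the domain $(\ref{2.17})$ with $(A(v,\alpha)-zI)\psi=0$. This would be a nonreal eigenvalue of the symmetric operator $A(v,\alpha)$, which is impossible because $\langle A(v,\alpha)\psi,\psi\rangle=z\|\psi\|^2$ must be real for $\psi\neq0$. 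Hence $\gamma(z)\neq0$; the system is uniquely solvable for every $h$, so $A(v,\alpha)-zI$ is onto $L_2$ for all nonreal $z$, the operator is self-adjoint, and its resolvent is the integral operator $(\ref{2.18})$--$(\ref{2.19})$.
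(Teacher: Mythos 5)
Your construction of the resolvent coincides with the paper's: both reduce $(A(v,\alpha)-zI)\psi=h$ to the integral equation (\ref{2.22}) and then to the $2\times 2$ system (\ref{2.29d}) in the two scalars $\langle\psi,v\rangle$ and $\psi_s=\frac{1}{2}\bigl(\psi(-0)+e^{-i\alpha}\psi(+0)\bigr)$, identify the coefficient matrix with $\Gamma(z)$ of (\ref{2.20}), and recover (\ref{2.19}) by Cramer's rule; the verification of $\Gamma^*(\bar z)=\Gamma(z)$ is likewise the same entrywise check. Where you genuinely diverge is the solvability step. The paper computes the asymptotics $\gamma(z)\to-1$ as $|\Im z|\to\infty$ and concludes only that the system is uniquely solvable at some $z_1,z_2$ with $\Im z_1>0>\Im z_2$; that already forces deficiency indices $(0,0)$ and hence self-adjointness, with the resolvent at the remaining nonreal $z$ supplied by general theory. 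You instead derive $\gamma(z)\neq 0$ for \emph{every} nonreal $z$ from symmetry: a zero of $\gamma$ would yield a nontrivial solution of the homogeneous problem, i.e.\ a nonreal eigenvalue of the symmetric operator $A(v,\alpha)$, which is impossible. Your route has the advantage of actually proving the assertion $\gamma(z)\neq0$ claimed in the theorem for all nonreal $z$ (the paper's argument, read literally, establishes it only for large $|\Im z|$) and it avoids the asymptotic computation. One point you should make explicit: when $\gamma(z)=0$ you assert that the nontrivial solution $(c_1,c_2)$ of the homogeneous system reconstructs a \emph{nonzero} $\psi=-E_1c_1-E_2c_2$. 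This deserves a line of justification, since $E_1$ and $E_2$ could in principle be linearly dependent: the homogeneous system (\ref{2.29d}) is precisely the statement that the scalars computed from this $\psi$ equal $(c_1,c_2)$, so $\psi\equiv0$ would force $(c_1,c_2)=(0,0)$, a contradiction. Also, the representation you invoke for the reconstruction is (\ref{2.22}), not (\ref{2.9}).
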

\begin{proof}
	The operator $A(v,\alpha)$
 is symmetric in the space $L_2$ according to Theorem $\ref{thm2.2}$. Therefore, in order to prove its self--adjointness, it is sufficient to show that for any $h$ and some $z_1$,\,\,$z_2$ for which
 $\Im z_1>0,$\,\,$\Im z_2<0$ there exists a solution of the problem
\begin{equation}\label{2.21}
(A(v,\alpha)-z_j I)^{-1}\psi(x)=h,\quad j=1,2.
\end{equation}
This problem is equivalent to the problem for finding a solution of the equation:
	$$\mathfrak{D}_{\max}\psi -z_j \psi =h-\frac{1}{2}v(x)(\psi(-0)+e^{-i\alpha}\psi(+0)),\quad j=1,2,$$
satisfying the boundary condition $(\ref{2.17})$. This problem can be reduced to the problem
\begin{equation}\label{2.22}
\psi(x)=-\mathcal{G}_z(x,-0;\alpha)\cdot\langle\psi,v\rangle+\int\,\mathcal{G}_z(x,y;\alpha)\left[h(y)-v(y)\frac{1}{2}(\psi(-0)+e^{-i\alpha}\psi(+0))\right]\,dy
\end{equation}
with the boundary condition $(\ref{2.17})$.
Indeed, if  the equation (\ref{2.22}) has a solution $\psi(x)\in L_2$, then it satisfies the equation
$$\left(i\frac{d}{dx}-zI\right)\psi(x)=h(x)-v(x)\frac{1}{2}\left(\psi(-0 )+e^{-i\alpha}\psi(+0)\right)
$$
and the boundary conditions
(\ref{2.17}).
 This follows from the properties of the Green's function $\mathcal{G}_z(x,y,\alpha)$.
A function
$$E_1(x,z)=\mathcal{G}_z(x,-0;\alpha)=g_z(x)w(z,\alpha)
$$
satisfies an equality
$$E_1(-0,z)-e^{-i\alpha} E_1(+0,z)=i
$$
in view of properties (\ref{2.3}).
The function representing the integrals in (\ref{2.22}) satisfies homogeneous conditions due to the boundary conditions
(\ref{2.5}).
Therefore, the solution $\psi(x)$  of the equation (\ref{2.22}) satisfies the condition
$$\psi(-0)-e^{-i\alpha}\psi(+0)=-i\langle\psi,v\rangle,
$$
that is the boundary conditions for elements of the domain  of the operator $A(v,\alpha).$
The equation (\ref{2.22}) can be reduced to a linear system of equations for two numbers $\langle\psi,v\rangle$ and $\psi_s=\frac{1}{2}\Bigl(\psi(-0)+e^ {-i\alpha}\psi(+0)\Bigr)$.
In order to do this, one need to represent the quantities $\psi_s$ and $\langle\psi,v\rangle$ on the left-hand side.
Since any function of the form $\displaystyle \varphi(x)=\int\,\mathcal{G}_z(x,y,\alpha)f(y)\,dy$,\,\,$f\in L_2$
has the property
$$\varphi_s=\frac{1}{2}\Bigl(\varphi(-0)+e^{-i\alpha}\varphi(+0)\Bigr)=\int\,\mathcal{G}_z (-0,y;\alpha)f(y)\,dy$$
by virtue of boundary properties for the Green's function $\mathcal{G}_z$.
The function $E_1(x,z)$ satisfies the condition
$$\frac{1}{2}\Bigl(E_1(-0,z)+e^{-i\alpha}E_1(+0,\alpha)\Bigr)=\frac{i}{2}\sign (\Im z).
$$
Then  we obtain  from (\ref{2.22}) the following equality:
\begin{equation}\label{2.27d}
\psi_s=-\frac{i}{2}\sign (\Im z)\langle \psi, v\rangle+\int\,\mathcal{G}(-0,y;\alpha)(h(y)-v( y)\psi_s)\,ds.
\end{equation}
If the equation (\ref{2.22}) is  scalar multiplied    on the right-hand side by the function $v$ in $L_2$, then we obtain
\begin{equation}\label{2.28d}
\langle\psi, v\rangle=-\langle E_1(\cdot,z),v\rangle\langle\psi, v\rangle+\langle h,E_2(\cdot,\bar{z})\rangle-\langle v,E_2(\cdot, x)\rangle\psi_s.
\end{equation}
The equations $(\ref{2.27d})$--(\ref{2.28d}) represent a linear system of the form
\begin{equation}\label{2.29d}
  \begin{pmatrix} -\gamma_{22}&\gamma_{12} \\ \gamma_{21}&-\gamma_{11} \end{pmatrix} \begin{pmatrix} \langle\psi,v\rangle\\ \psi_s\end{pmatrix}=
    \begin{pmatrix} \langle h,E_1\rangle\\ \langle h, E_2\rangle \end{pmatrix}.
\end{equation}
A matrix determinant
$$ \begin{vmatrix}- \gamma_{22}&\gamma_{12} \\ \gamma_{21}&-\gamma_{11} \end{vmatrix}=\gamma(z)= \gamma_{11} \gamma_{22}-\gamma_{12} \gamma_{21}
$$
 for $|\Im z|\rightarrow \infty,$\,\,$\gamma(z)\rightarrow -1.$  Therefore, there are numbers $z_1$ and $z_2$ such that
$\Im z_1>0$, \,\,$\Im z_2<0$ with sufficiently large $|\Im z_k|$ that the system (\ref{2.29d}) has a unique solution. Therefore, substituting the solution from the system (\ref{2.29d}) into (\ref{2.22}), we obtain that the equation (\ref{2.22})  has a solution for any $h\in L_2$ and  for such $z=z_k,\,\,k=1,2$. This means that the symmetric operator $A(v,\alpha)$ has a deficiency index (0,0), and, therefore, this operator is self-adjoint in the space $L_2$. Since the solution to the system (\ref{2.29d}) can be represented as
\begin{equation}\label{2.30d}
  \begin{pmatrix}\langle\psi,v\rangle\\\psi_s \end{pmatrix}=\frac{1}{\gamma(z)}
  \begin{pmatrix} -\gamma_{11}&-\gamma_{12} \\ -\gamma_{21}&-\gamma_{22} \end{pmatrix} \begin{pmatrix}\langle h,E_1(\cdot, \bar{z})\rangle\\ \langle h,E_2(\cdot,\bar{z})\rangle \end{pmatrix},
\end{equation}
then substituting these solutions into (\ref{2.22}), we obtain a representation of the solution $\psi(x)$ of
equation (\ref{2.22}) for $h$ in the form
 (\ref{2.18})
where the Green's function $\mathcal{G}_z(x,y,v,\alpha)$ has the form (\ref{2.19}) with the specified parameter values in (\ref{2.21}).
\end{proof}
Since the Green's function $\mathcal{G}_z(x,y,\alpha)$ is explicitly expressed through the Green's function $g_z(x)$ according to formula
(\ref{2.7}),
then the Green's function $\mathcal{G}_z (x,y,v,\alpha)$
is explicitly expressed through $\mathcal{G}_z(x,y,\alpha)$ according to Theorem  \ref{thm2.4}.
Therefore, this function can be expressed explicitly in terms of $g_z(x)$.
\begin{theorem}\label{thm2.5}
For any potential $v \in L_2$ and a real number $\alpha$, an operator
  $A(v,\alpha)$
 of the form
\begin{equation}\label{2.16d}
	A(v,\alpha)\psi= i \frac{d\psi(x)}{dx}+v(x)\left[\frac{1}{2}\psi(-0)+\frac{e^ {-i\alpha}}{2} \psi(+0)\right],\quad x\neq 0
\end{equation}
with the boundary condition
\begin{equation}\label{2.17d}
	i\psi(-0)-ie^{-i\alpha}\psi(+0)=\langle\psi,v\rangle
\end{equation}
is self-adjoint in the space $ L_2$. For nonreal $z$, the resolvent
$(A(v,\alpha)-zI)^{-1}$
 is an~integral operator with the kernel $\mathcal{G}_{z}(x,y;v, \alpha )$
	\begin{equation}\label{2.18d}
(A(v,\alpha)-zI)^{-1}h(z)=\int\,\mathcal{G}_{z}(x,y;v, \alpha)h(y) \,dy.
	\end{equation}
The kernel $\mathcal{G}_{z}(x,y;v, \alpha)$ is a perturbation of the kernel $g_z(x-y)$ of the free momentum operator $(\ref{2.2})$ of rank at most $2$:
\begin{equation}\label{2.19d}
	\mathcal{G}_{z}(x,y;v, \alpha)=g_z(x-y)-\frac{1}{\gamma(z)}\sum_{j,k=1}^{2}\,e_j(x,z)\gamma_{j,k}(z)\bar{e}_k(y,\bar{z}),
\end{equation}
where
\begin{equation}\label{2.20d}
	\begin{array}{ll}
		\gamma(z)=\gamma_{11}(z)\gamma_{22}(z)-\gamma_{12}(z)\gamma_{21}(z)  ,\\[2mm]
\displaystyle		\gamma_{11}(z)=\langle e_2,v\rangle-\frac{2i(1-e^{i\alpha})}{1+e^{i\alpha}},\\[2mm]
\displaystyle		\gamma_{12}(z)=-\frac{2}{1+e^{-i\alpha}}-\langle v,g_{\overline{z}}\rangle	,\\[2mm]
\displaystyle		\gamma_{21}(z)=-\frac{2}{1+e^{i\alpha}}-\langle g_z,v\rangle	,\\[2mm]
		\gamma_{22}(z)=\frac{i}{1+e^{-i\alpha}}\left[\theta(\Im z)-e^{-i\alpha}\theta(-\Im z)\right]		,\\[2mm]
		e_1(x,z)=g_z(x),\, e_2(x,z)=\int\, g_z(x-y)v(y)\,dy.
	\end{array}
\end{equation}
\end{theorem}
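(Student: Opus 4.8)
The plan is to treat self-adjointness and the resolvent formula separately. The operator $A(v,\alpha)$ in (\ref{2.16d})--(\ref{2.17d}) is literally the operator of Theorem~\ref{thm2.4}, since $\mathfrak{D}_{\max}\psi=i\,d\psi/dx$ on $\Omega$; hence its self-adjointness and the fact that $(A(v,\alpha)-zI)^{-1}$ is an integral operator are already established there, and nothing new is required for that part. The whole content of the statement is therefore the explicit identity (\ref{2.19d})--(\ref{2.20d}), which re-expresses the kernel $\mathcal{G}_z(x,y;v,\alpha)$ as a rank-at-most-$2$ perturbation of the \emph{free} kernel $g_z(x-y)$ rather than of the point-interaction kernel $\mathcal{G}_z(x,y;\alpha)$. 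Following the remark preceding the theorem, I would obtain this by substituting the closed form (\ref{2.7}) of $\mathcal{G}_z(x,y;\alpha)$ into the formula (\ref{2.19}) of Theorem~\ref{thm2.4}.

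The first step is to rewrite the three ingredients of (\ref{2.19}) in the basis $\{e_1,e_2\}$. Using the identity $E_1(x,z)=g_z(x)\,w(z,\alpha)$ from the proof of Theorem~\ref{thm2.4}, with $w(z,\alpha)=\theta(\Im z)+e^{i\alpha}\theta(-\Im z)$, one has $E_1=w\,e_1$, while (\ref{2.7}) gives
\begin{equation*}
E_2(x,z)=\int \mathcal{G}_z(x,y;\alpha)v(y)\,dy=e_2(x,z)+\beta(z,\alpha)\langle v,g_{\bar z}\rangle\,e_1(x,z),
\end{equation*}
and, since $\bar g_{\bar z}(y)=\bar e_1(y,\bar z)$,
\begin{equation*}
\mathcal{G}_z(x,y;\alpha)=g_z(x-y)+\beta(z,\alpha)\,e_1(x,z)\bar e_1(y,\bar z).
\end{equation*}
Thus $\mathrm{span}\{E_1,E_2\}=\mathrm{span}\{e_1,e_2\}$, so the entire kernel (\ref{2.19}) equals $g_z(x-y)$ plus a bilinear expression in $e_1,e_2$; it only remains to collect coefficients.

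Writing the passage from $(E_1,E_2)$ to $(e_1,e_2)$ as a triangular matrix $T(z)$, the rank-$2$ term $\gamma^{-1}\sum_{j,k}E_j\gamma_{jk}\bar E_k$ becomes the congruence $\gamma^{-1}\sum_{l,m}e_l\,\big(T^{\!\top}(z)\,\Gamma(z)\,\overline{T(\bar z)}\big)_{lm}\,\bar e_m$, and the extra term $\beta\,e_1\bar e_1$ coming from $\mathcal{G}_z(x,y;\alpha)$ itself must be absorbed into this, producing a single new matrix and determinant $\gamma(z)=\gamma_{11}\gamma_{22}-\gamma_{12}\gamma_{21}$. Carrying this out with the explicit data (\ref{2.3}), (\ref{2.8}) for $g_z$, $\beta$ and $w$, and fixing the overall normalization the paper adopts, should reproduce the entries in (\ref{2.20d}), the sign producing the minus in (\ref{2.19d}). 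The claimed reality properties $\Gamma^*(\bar z)=\Gamma(z)$ and $\overline{\gamma}(\bar z)=\gamma(z)\neq 0$ then descend from the corresponding properties in Theorem~\ref{thm2.4} together with the conjugation rules $\overline{\beta}(\bar z,\alpha)=\beta(z,\alpha)$ and $\bar g_{\bar z}(x)=g_z(-x)$.

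The main obstacle is purely the bookkeeping, made awkward by the piecewise dependence of $w(z,\alpha)$, $\beta(z,\alpha)$ and $\gamma_{22}(z)$ on $\sign(\Im z)$: one must carry the factors $\theta(\pm\Im z)$ through the congruence using $\theta(\Im z)+\theta(-\Im z)=1$ and $\theta(\Im z)\theta(-\Im z)=0$, or equivalently verify the half-planes $\Im z>0$ and $\Im z<0$ separately. The delicate point is matching the normalizing factors $2/(1+e^{\pm i\alpha})$ in (\ref{2.20d}), which are not produced by a bare change of basis and require care at $\alpha=\pi$; one should check that the resulting kernel (\ref{2.19d}) stays well defined there. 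If tracking this normalization through the substitution proves unwieldy, the cleaner cross-check is to re-derive the resolvent directly: solve $i\psi'-z\psi=h-\tfrac12 v\,(\psi(-0)+e^{-i\alpha}\psi(+0))$ with the ansatz $\psi(x)=\int g_z(x-y)[h(y)-v(y)\psi_s]\,dy+c\,g_z(x)$, impose (\ref{2.17d}), and read off the $2\times2$ system for $\langle\psi,v\rangle$ and $\psi_s$ whose matrix is exactly $\|\gamma_{jk}(z)\|$ of (\ref{2.20d}); this route generates the factors $1+e^{\pm i\alpha}$ naturally from the combinations $\psi(-0)\pm e^{-i\alpha}\psi(+0)$.
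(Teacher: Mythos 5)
Your proposal is essentially correct, but your primary route is not the one the paper takes: what you relegate to a ``cleaner cross-check'' at the end is in fact the paper's actual proof. The paper does not substitute \eqref{2.7} into \eqref{2.19}; it re-derives the resolvent from scratch with the ansatz $\psi(x)=-g_z(x)\psi_g-e_2(x,z)\psi_s+\int g_z(x-y)h(y)\,dy$, where $\psi_g=i[\psi(-0)-\psi(+0)]$ and $\psi_s=\tfrac12[\psi(-0)+e^{-i\alpha}\psi(+0)]$ (so the paper's unknowns are $(\psi_g,\psi_s)$ rather than your $(\langle\psi,v\rangle,\psi_s)$ --- on the domain these are equivalent via \eqref{2.17d}, and the coefficient of $g_z(x)$ is forced to be $-\psi_g$ by the jump of $g_z$ at $0$), imposes \eqref{2.17d}, reads off the $2\times2$ system \eqref{2.23d}, checks $\gamma(z)\neq 0$ for $|\Im z|$ large, and identifies $\gamma_{jk}$ with $\pm a_{jk}$. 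Your primary route --- rewriting \eqref{2.19} in the basis $\{e_1,e_2\}$ via $E_1=w\,e_1$, $E_2=e_2+\beta\langle v,g_{\bar z}\rangle e_1$ and $\mathcal G_z(x,y;\alpha)=g_z(x-y)+\beta\,e_1\bar e_1$ --- is legitimate and the three identities you quote are correct; since the relation $N=-\Gamma/\det\Gamma$ determines $\Gamma$ uniquely when $\det N\neq0$, the congruence must reproduce \eqref{2.20d} if carried out. What it buys is that self-adjointness and invertibility come for free from Theorem~\ref{thm2.4}, at the price of genuinely heavy bookkeeping (absorbing the extra $\beta e_1\bar e_1$ term changes the matrix non-congruently, and one must renormalize so that the scalar prefactor is again the determinant); what the paper's direct route buys is that the combinations $1+e^{\pm i\alpha}$ appear automatically and each $a_{jk}$ is obtained in one line. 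Note, however, that your worry about $\alpha=\pi$ is a feature of the stated formulas \eqref{2.20d} themselves, not of either derivation: the paper's $a_{jk}$ carry the same singular factors $2/(1+e^{\pm i\alpha})$, so the direct route does not resolve that degeneracy either. Since you only assert that the computation ``should reproduce'' \eqref{2.20d} without performing it, the primary route as written is a plan rather than a proof, but the fallback you describe is exactly the paper's argument and closes the gap.
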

\begin{proof}
The operator $A(v,\alpha)$
 is self-adjoint in the space $L_2$ according to Theorem \ref{thm2.4}. Then, it is sufficient to show that for any $h\in L_2$ and some nonreal $z$
 there exists a solution of the problem
\begin{equation}\label{2.21d}
(A(v,\alpha)-z I)^{-1}\psi(x)=h.
\end{equation}
This problem can be reduced to the problem
\begin{equation}\label{2.22d}
	\psi(x)=-g_z(x)\psi_g-e_2(x,z)\psi_s+\int\,g_z(x-y)h(y)\,dy
\end{equation}
with the boundary condition $(\ref{2.17d})$, where $\psi_g=i\left[\psi(-0)-\psi(+0)\right]$, $\psi_s=\frac{1}{2}\left[\psi(-0)+e^{-i\alpha}\psi(+0)\right]$. This is equivalent to solving a linear system with respect to $\psi_s$ and $\psi_g$. If we substitute $\psi_s$ from $(\ref{2.22d})$, and  the solution $\psi(x)$ from $(\ref{2.22d})$ to the boundary condition $(\ref{2.17d})$, we get a linear system
\begin{equation}\label{2.23d}
		\begin{pmatrix}
			a_{11}(z) & a_{12}(z)\\
			a_{21}(z) &a_{22}(z)
		\end{pmatrix}\begin{pmatrix} \psi_g\\\psi_s \end{pmatrix}=\begin{pmatrix}\langle h,  g_{\bar{z}}\rangle\\\langle h,e_2(\cdot,\bar{z})\rangle \end{pmatrix},
\end{equation}
where
\begin{equation*}
	\begin{array}{ll}
\displaystyle		a_{11}(z)=\frac{i}{1+e^{-i\alpha}}\left[\theta(\Im z)-e^{-i\alpha}\theta(-\Im z)\right]	,\\[2mm]
\displaystyle		a_{12}(z)=\frac{2}{1+e^{-i\alpha}}+\langle v,g_{\overline{z}}\rangle	,\\[2mm]
\displaystyle		a_{21}(z)=\frac{2}{1+e^{i\alpha}}+\langle g_z,v\rangle	,\\[2mm]
\displaystyle		a_{22}(z)=\langle e_2,v\rangle-\frac{2i(1-e^{i\alpha})}{1+e^{i\alpha}}.
	\end{array}
\end{equation*}
The determinant of the system $(\ref{2.23d})$ is equal to
\begin{align*}
	\gamma(z)&=\left[\frac{i}{1+e^{-i\alpha}}\left(\theta(\Im z)-e^{-i\alpha}\theta(-\Im z)\right)\right]	\left[\langle e_2,v\rangle-\frac{2i(1-e^{i\alpha})}{1+e^{i\alpha}}\right]\\
	&-\left[\frac{2}{1+e^{-i\alpha}}+\langle v,g_{\overline{z}}\rangle\right]\left[\frac{2}{1+e^{i\alpha}}+\langle g_z,v\rangle\right].
\end{align*}
It is easy to see that if $|\Im z_j |$,\, $j=1,2$ are large enough, then $\gamma(z_j) \neq 0$. Therefore, the system $(\ref{2.23d})$ has a unique solution. Substituting the obtained values of $\psi_g$ and $\psi_s$ into $(\ref{2.22d})$, we get a representation of the solution $\psi(x)$ in terms of $h$ as follows:
\begin{equation}\label{2.24d}
	\psi(x)=\int\,\mathcal{G}_z(x,y;v,\alpha)h(y)\,dy,
\end{equation}
where $\mathcal{G}_z(x,y;v,\alpha)$ has the form $(\ref{2.19d})$--$(\ref{2.20d})$ with \begin{equation}
	\begin{array}{ll}
		\gamma_{11}(z)=a_{22}(z),\\[2mm]
		\gamma_{12}(z)=-a_{12}(z)	,\\[2mm]
		\gamma_{21}(z)=-a_{21}(z)	,\\[2mm]
		\gamma_{22}(z)=a_{11}(z)	.\\[2mm]
	\end{array}
\end{equation} The matrix $\Gamma(z)=\|\gamma_{jk}(z)\|$ has a property $\Gamma^*(\overline{z})=\Gamma(z)$ and
$$\gamma(z)=a_{11}(z)a_{22}(z)-a_{21}(z)a_{12}(z)=
\gamma_{11}(z)\gamma_{22}(z)-\gamma_{12}(z)\gamma_{21}(z). $$
\end{proof}
\begin{remark}\label{3.1}
Since   the operator $A(v_1,v_2,\alpha)$
differs from the operator
  $A(v,\alpha)$
  by a symmetric operator of rank at most 2 according to  Lemma \ref{thm2.3}, then its resolvent differs from the resolvent $\mathcal{G}_z(x ,y,v,\alpha)$ by an operator of rank 2.

Recall that if the self-adjoint operator $A$ is perturbed   in a Hilbert space $H$ by the bounded operator
$$V\psi=v_1\langle\psi,v_2\rangle+v_2\langle\psi,v_1\rangle, \quad v_1,v_2 \in H,
$$
where $v_1, v_2 \in H,$
  then the resolvent
$$
[(A+V-zI)^{-1}]h=R_z h-( R_zv_1,R_zv_2)\cdot\Gamma^{-1}(z)\cdot \begin{pmatrix} \langle h, v_1\rangle\\ \langle h , v_2\rangle\end{pmatrix}
$$
where $R_z=(A-zI)^{-1}$ and a numeric matrix has the following form:
$$
\Gamma(z)=\begin{pmatrix} \langle R_zv_1, v_1\rangle& 1+\langle R_zv_1, v_2\rangle \\ 1+\langle R_zv_2, v_1\rangle & \langle R_zv_2, v_2\rangle\end{pmatrix}.$$
\end{remark}

An explicit formula for the resolvent of the operator $A(v,\alpha)$
 from Theorem \ref{thm2.4}, Theorem  \ref{thm2.5}  and
 Remark \ref{rem2.1} allows one to get a representation of the Green's function of operators
$A(v_1,v_2,\alpha)$
in the form of rank 4 perturbations of the Green's function $g_z(x)$ of the free momentum operator and the operator with one potential.
However, it is possible to  obtain directly a representation of the resolvent by a rank 3 perturbation.
\begin{theorem}\label{thm3.5}
   For potentials $v_1,v_2 \in L_2$ and a real number $\alpha \in [0,2\pi)$ describing the boundary conditions (\ref{2.210}), the operator $A(v_1,v_2,\alpha)$
    is self-adjoint in the space $ L_2$.
For nonreal $z$,  its  resolvent $(A(v_1,v_2,\alpha)-zI)^{-1}$
 is an~integral operator with the kernel $\mathcal{G}_{z}(x,y;v_1,v_2, \alpha )$
	\begin{equation}\label{3.30d}
		(A(v_1,v_2,\alpha)-zI)^{-1}h(z)=\int\,\mathcal{G}_{z}(x,y;v_1,v_2, \alpha)h(y) \,dy.
	\end{equation}
The kernel $\mathcal{G}_{z}(x,y;v, \alpha)$ has the following representation:
\begin{equation}\label{3.31d}
\displaystyle \mathcal{G}_z(x,y;v_1,v_2,\alpha)=\mathcal{G}_z(x,y,\alpha)-\frac{1}{\gamma(z)}\sum\limits_{j,k=1}^{2}\mathcal{E}_j(x,z)\gamma_{jk}\bar{\mathcal{E}}_k(y,\bar{z}),
\end{equation}
where
\begin{equation}\label{3.32d}
\begin{array}{ll}
 E_0(x,z)=\mathcal{G}_z(x,-0;\alpha),&\quad E_k(x,z)=\int\, \mathcal{G}_z(x,y;\alpha)v_k(y)\,dy,\quad k=1,2,\\[2mm]
\mathcal{E}_1(x,z)=E_1+2iE_0, &\quad \mathcal{E}_2(x,z)=E_2-2ie^{-i\alpha}E_0,
\end{array}
 \end{equation}
and
\begin{equation}
\begin{array}{l}
\gamma(z)= \gamma_{11}(z) \gamma_{22}(z)-\gamma_{12}(z) \gamma_{21}(z),\\[2mm]
	\gamma_{11}(z)=2i\left(\sign(\Im z)+e^{i\alpha}\langle v_2,E_0(\cdot,{\bar{z}})\rangle-e^{-i\alpha}\langle E_0,v_2\rangle-\frac{i}{2}\langle E_2,v_2\rangle\right),\\[2mm]
		\gamma_{12}(z)=2i\left(2e^{-i\alpha}\theta(\Im z)+\langle v_2,E_0(\cdot,{\bar{z}})\rangle+e^{-i\alpha}\langle E_0,v_1\rangle+\frac{i}{2}\langle E_2,v_1\rangle\right),\\[2mm]
		\gamma_{21}(z)=-2i\left(2e^{i\alpha}\theta(-\Im z)+e^{i\alpha}\langle v_1,E_0(\cdot,{\bar{z}})\rangle+\langle E_0,v_2\rangle-\frac{i}{2}\langle E_1,v_2\rangle	\right)	,\\[2mm]
		\gamma_{22}(z)=-2i\left(-\sign(\Im z)+\langle v_1,E_0(\cdot,{\bar{z}})\rangle-\langle E_0,v_1\rangle+\frac{i}{2}\langle E_1,v_1\rangle\right).\\[2mm]
	\end{array}
\end{equation}
The matrix $\Gamma(z)=\|\gamma_{jk}(z)\|$ has the property $\Gamma^*(\overline{z})=\Gamma(z),$ and $\gamma(z) =\det \Gamma$ has the property
$\overline{\gamma}(\overline{z})=\gamma(z)\neq 0$.
\end{theorem}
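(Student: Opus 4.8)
The plan is to treat the two assertions separately: the self-adjointness I would inherit from the earlier results almost for free, whereas the explicit kernel (\ref{3.31d}) requires a direct construction modeled on the proof of Theorem \ref{thm2.4}. For self-adjointness I would avoid repeating a deficiency-index computation. By Lemma \ref{thm2.2} the operator $A(v_1,v_2,\alpha)$ is symmetric. Setting $v=v_1+e^{i\alpha}v_2$, condition (\ref{2.12}) holds for the pair $A(v_1,v_2,\alpha)$ and $A(v,\alpha)=A(\tfrac12 v,\tfrac{e^{-i\alpha}}{2}v,\alpha)$, because $\tfrac12 v+e^{i\alpha}\tfrac{e^{-i\alpha}}{2}v=v$; hence by Theorem \ref{thm2.3} and Remark \ref{rem2.1} these two operators share a common domain and differ by the bounded symmetric operator $K(v_1,v_2,\alpha)$ of rank at most $2$ from (\ref{2.14}). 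Since $A(v,\alpha)$ is self-adjoint by Theorem \ref{thm2.4} and adding a bounded symmetric operator to a self-adjoint operator on the same domain preserves self-adjointness, $A(v_1,v_2,\alpha)$ is self-adjoint. This disposes of the first claim in a few lines.

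For the resolvent I would write $(A(v_1,v_2,\alpha)-z)\psi=h$, move the two nonlocal terms of (\ref{2.29}) to the right-hand side, invert $\mathfrak{D}_\alpha-z$ through the kernel $\mathcal{G}_z(x,y;\alpha)$ of Theorem \ref{thm2.1}, and add the homogeneous solution $E_0(x,z)=\mathcal{G}_z(x,-0;\alpha)$ that carries the inhomogeneous jump. The decisive algebraic device is the ansatz
$$\psi(x)=\xi_1\mathcal{E}_1(x,z)+\xi_2\mathcal{E}_2(x,z)+\int \mathcal{G}_z(x,y;\alpha)h(y)\,dy,$$
with $\mathcal{E}_1=E_1+2iE_0$ and $\mathcal{E}_2=E_2-2ie^{-i\alpha}E_0$ as in (\ref{3.32d}). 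First I would verify that this particular combination satisfies the domain condition (\ref{2.210}) identically: the integral term and $E_1,E_2$ obey the homogeneous relation $\varphi(-0)=e^{-i\alpha}\varphi(+0)$ of $\mathfrak{D}_\alpha$, so only $E_0$ produces a jump, namely $E_0(-0,z)-e^{-i\alpha}E_0(+0,z)=i$ from Remark \ref{2.1}; a short computation then shows that the coefficients $2i$ and $-2ie^{-i\alpha}$ are chosen exactly so that $\psi(-0)-e^{-i\alpha}\psi(+0)=-i[\langle\psi,v_1\rangle+e^{-i\alpha}\langle\psi,v_2\rangle]$, which is (\ref{2.210}).

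Next I would impose self-consistency: the scalars $\xi_1,\xi_2$ must reproduce the bracketed boundary data occurring in the action (\ref{2.29}), i.e.\ $\xi_1=-[\psi(-0)+\tfrac{i}{2}\langle\psi,v_1\rangle]$ and $\xi_2=-[\psi(+0)-\tfrac{i}{2}\langle\psi,v_2\rangle]$. Evaluating $\psi(\pm0)$ and $\langle\psi,v_k\rangle$ from the ansatz and substituting turns these two identities into a $2\times2$ linear system for $\xi_1,\xi_2$ whose matrix, up to the sign and transposition pattern already seen in (\ref{2.29d})--(\ref{2.30d}), has the entries $\gamma_{jk}(z)$ listed in the statement, and whose right-hand side is the vector with components $\langle h,\mathcal{E}_1(\cdot,\bar z)\rangle$ and $\langle h,\mathcal{E}_2(\cdot,\bar z)\rangle$. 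Solving by Cramer's rule and inserting back into the ansatz yields (\ref{3.31d}) with the factor $1/\gamma(z)$, where $\gamma(z)=\det\Gamma(z)$.

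The main obstacle is the bookkeeping that identifies the entries $\gamma_{jk}$: I would need the boundary values $\mathcal{E}_j(\pm0,z)$ together with the scalar products $\langle\mathcal{E}_j,v_k\rangle$, assembled from the free boundary data $g_z(\pm0)$ and $g_z(+0)\pm g_z(-0)$ of (\ref{2.3}), the weight $w(z,\alpha)=\theta(\Im z)+e^{i\alpha}\theta(-\Im z)$, and the averaging identities for $\mathcal{G}_z(\cdot,y;\alpha)$ in Remark \ref{2.1}. The reciprocity relations $\bar g_{\bar z}(x)=g_z(-x)$ and $\overline{\beta}(\bar z,\alpha)=\beta(z,\alpha)$ that let me rewrite the source terms as $\langle h,\mathcal{E}_k(\cdot,\bar z)\rangle$ also give $\mathcal{G}_z(-0,y;\alpha)=\overline{\mathcal{G}_{\bar z}(y,-0;\alpha)}$, and from these the relation $\Gamma^*(\bar z)=\Gamma(z)$ follows essentially by inspection, whence $\overline{\gamma}(\bar z)=\gamma(z)$. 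Finally I would argue $\gamma(z)\neq0$ for every nonreal $z$: a zero would produce a nontrivial null vector $(\xi_1,\xi_2)$ and hence, via the ansatz with $h=0$, a nonzero element of $\ker(A(v_1,v_2,\alpha)-z)$, contradicting the self-adjointness established above; the limit of $\gamma(z)$ as $|\Im z|\to\infty$, computed as in Theorem \ref{thm2.4}, confirms that the system is nondegenerate for large $|\Im z|$.
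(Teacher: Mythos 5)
Your proposal is correct, and its core --- the construction of the kernel --- is essentially the paper's own argument: the same ansatz $\psi=-\psi_1\mathcal{E}_1-\psi_2\mathcal{E}_2+\int\mathcal{G}_z(\cdot,y;\alpha)h(y)\,dy$ with $\mathcal{E}_1=E_1+2iE_0$, $\mathcal{E}_2=E_2-2ie^{-i\alpha}E_0$, the same reduction to a $2\times2$ system for the functionals $\psi_1=\psi(-0)+\tfrac{i}{2}\langle\psi,v_1\rangle$, $\psi_2=\psi(+0)-\tfrac{i}{2}\langle\psi,v_2\rangle$ with right-hand side $\langle h,\mathcal{E}_k(\cdot,\bar z)\rangle$, and the same identification $\gamma_{11}=a_{22}$, $\gamma_{12}=-a_{12}$, $\gamma_{21}=-a_{21}$, $\gamma_{22}=a_{11}$ after Cramer's rule. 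Where you genuinely diverge is in the two ``qualitative'' claims. For self-adjointness the paper re-runs the deficiency-index argument inside the proof: it computes $\gamma(z)\to-4$ as $|\Im z|\to\infty$, deduces solvability of the system for some $z_1,z_2$ in opposite half-planes, and concludes the deficiency indices are $(0,0)$. You instead inherit self-adjointness from Theorem \ref{thm2.4} via Theorem \ref{thm2.3}/Remark \ref{rem2.1}, using that $A(v_1,v_2,\alpha)$ and $A(v,\alpha)$ share a domain and differ by the bounded symmetric rank-$\le 2$ operator $K(v_1,v_2,\alpha)$; this is shorter, is exactly the reduction the paper itself advertises in Remark \ref{rem2.1} and Remark 3.1 (but then does not use in this proof), and it buys you in addition a clean argument that $\gamma(z)\neq0$ for \emph{every} nonreal $z$ --- something the paper's proof only establishes for $|\Im z|$ large. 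Two small points to tighten: (i) the boundary condition (\ref{2.210}) does not hold for the ansatz ``identically'' but only once the consistency relations $\xi_j=-\psi_j$ are imposed, since the jump of $\psi$ is $-2\xi_1+2e^{-i\alpha}\xi_2$ and must be matched to $-i[\langle\psi,v_1\rangle+e^{-i\alpha}\langle\psi,v_2\rangle]$ using those relations (the paper is equally terse here); (ii) in your $\gamma(z)\neq0$ argument you should note that a nontrivial null vector $(\xi_1,\xi_2)$ cannot give $\psi\equiv0$, because the consistency relations would then force $\xi_1=\xi_2=0$ --- with that one line the contradiction with self-adjointness is complete.
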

\begin{proof}
For the symmetric operator $A(v_1,v_2, \alpha)$,
the construction of the Green's function is reduced to the equation
\begin{equation}\label{3.30}
\psi(x)=-\mathcal{E}_1(x,z)\psi_1-\mathcal{E}_2(x,z)\psi_2+\int\,\mathcal{G}_z(x,y;\alpha)h(y)dy,
\end{equation}
 where
\begin{equation}
\begin{array}{ll}
 E_0(x,z)=\mathcal{G}_z(x,-0;\alpha),&\quad E_k(x,z)=\int\, \mathcal{G}_z(x,y;\alpha)v_k(y)\,dy,\quad k=1,2,\\[2mm]
\mathcal{E}_1(x,z)=E_1+2iE_0, &\quad \mathcal{E}_2(x,z)=E_2-2ie^{-i\alpha}E_0,
\end{array}
 \end{equation}
and
\begin{equation}\label{3.33}
   \displaystyle  \psi_1=\psi(-0)+\frac{i}{2}\langle\psi,v_1\rangle,\quad\psi_2=\psi(+0)-\frac{i}{2}\langle\psi,v_2\rangle.
  \end{equation}
Indeed, the function $\psi(x)$ of $(\ref{3.30})$ satisfies the differential equation $(A(v_1,v_2,\alpha)-z_kI)\psi=h$ for any $h\in L_2$
 and nonreal $z_k$ such that $\Im z_1 >0$ and $\Im z_2 <0$.
The equation $(\ref{3.30})$ has the following equivalent form:
\begin{equation}\label{3.31}
\psi(x)=E_0(x;z)[-2i\psi_1+2ie^{-i\alpha}\psi_2]-E_1(x,z)\psi_1-E_2(x,z)\psi_2+\int\,\mathcal{G}_z(x,y;\alpha)h(y)dy.
\end{equation}
From the equation $(\ref{3.31})$, using the fact that $\mathcal{G}_z(+0,y;\alpha)=e^{i\alpha}\mathcal{G}_z(-0,y;\alpha)$, we obtain
  \begin{equation}\label{3.32}
  \begin{array}{l}
   \psi(-0)=2\left(\psi_1-e^{-i\alpha}\psi_2\right)\theta(\Im z) -\langle v_1,E_0(\cdot,{\bar{z}})\rangle\psi_1-\langle v_2,E_0(\cdot,{\bar{z}})\rangle\psi_2+\langle h,E_0(\cdot,{\bar{z}})\rangle,\\[2mm]
   \psi(+0)=-2e^{i\alpha}\left(\psi_1-e^{-i\alpha}\psi_2\right)\theta(-\Im z)-e^{i\alpha}\langle v_1,E_0(\cdot,{\bar{z}})\rangle\psi_1-e^{i\alpha}\langle v_2,E_0(\cdot,{\bar{z}})\rangle\psi_2+e^{i\alpha}\langle h,E_0(\cdot,{\bar{z}})\rangle,\\
\langle \psi,v_k\rangle=-2i\left(\psi_1-e^{-i\alpha}\psi_2\right)\langle E_0,v_k\rangle-\psi_1\langle E_1,v_k\rangle-\psi_2\langle E_2,v_k\rangle+\langle h,E_k(\cdot,{\bar{z}})\rangle,\quad k=1,2.
    \end{array}
  \end{equation}
  If we substitute the values $\psi(-0)$, $\psi(+0)$, $\langle\psi,v_1\rangle$ and $\langle\psi,v_2\rangle$ from $(\ref{3.32})$ to $(\ref{3.33})$, we get the explicit form of the system for $\psi_1$ and $\psi_2$.
It has the following form:
    \begin{equation}\label{3.34}
       \begin{pmatrix}    a_{11} &  a_{12} \\  a_{21} &   a_{22}        \end{pmatrix} \begin{pmatrix} \psi_1\\ \psi_2    \end{pmatrix}=
    \begin{pmatrix} \langle h, \mathcal{E}_1(\cdot,{\bar{z}})\rangle\\ \langle h, \mathcal{E}_2(\cdot,{\bar{z}})\rangle \end{pmatrix},
    \end{equation}
where
\begin{equation}
\begin{array}{ll}
\gamma(z)=a_{11}a_{22}-a_{12}a_{21}=\gamma_{11}\gamma_{22}-\gamma_{12}\gamma_{21}\\
\gamma_{11}(z)=a_{22}(z)&\gamma_{12}(z)=-a_{12}(z)\\
\gamma_{21}(z)=-a_{21}(z)&\gamma_{22}(z)=a_{11}(z),
\end{array}
\end{equation}
and
$$
\begin{array}{l}
		a_{11}(z)=-2i\left(-\sign(\Im z)+\langle v_1,E_0(\cdot,{\bar{z}})\rangle-\langle E_0,v_1\rangle+\frac{i}{2}\langle E_1,v_1\rangle\right),\\[2mm]
		a_{12}(z)=-2i\left(2e^{-i\alpha}\theta(\Im z)+\langle v_2,E_0(\cdot,{\bar{z}})\rangle+e^{-i\alpha}\langle E_0,v_1\rangle+\frac{i}{2}\langle E_2,v_1\rangle\right),\\[2mm]
		a_{21}(z)=2i\left(2e^{i\alpha}\theta(-\Im z)+e^{i\alpha}\langle v_1,E_0(\cdot,{\bar{z}})\rangle+\langle E_0,v_2\rangle-\frac{i}{2}\langle E_1,v_2\rangle	\right),\\[2mm]
		a_{22}(z)=2i\left(\sign(\Im z)+e^{i\alpha}\langle v_2,E_0(\cdot,{\bar{z}})\rangle-e^{-i\alpha}\langle E_0,v_2\rangle-\frac{i}{2}\langle E_2,v_2\rangle\right)	.\\[2mm]
	\end{array}
$$
A determinant
    $$\gamma(z)=a_{11}a_{22}-a_{12}a_{21}\rightarrow -4\ , \ \textrm{for}\ \  |\Im z|\rightarrow \infty,
    $$
     since $$
      \begin{array}{ll}
   a_{11} (z)=2i\sign(\Im z) +o(1),& a_{12}(z)=-4ie^{-i\alpha}\theta(\Im z)+o(1),\\[2mm]
   a_{21}(z)=4ie^{i\alpha}\theta(-\Im z)+o(1),&  a_{22}(z)=2i\sign(\Im z) +o(1), \,\,|\Im z|\rightarrow \infty.
       \end{array}
       $$
Therefore, there exist nonreal $z_1$,\, $z_2$ such that $\Im z_1>0,$\, $\Im z_2<0$ and $|\Im z_k|$,\, $j=1,2$  large enough that $\gamma(z)\neq 0$ and the system $(\ref{eq3a})$ has a unique solution. This gives us the self-adjointness of the operator $A(v_1,v_2,\alpha)$.
Moreover, we can show that $\bar a_{jk}(\bar z)=a_{kj}(z)$, then $\bar \gamma_{jk}(\bar z)=\gamma_{kj}(z)$. Therefore the matrix $\Gamma(z)=\|\gamma_{jk}(z)\|$ has the property $\Gamma^*(\overline{z})=\Gamma(z),$ and $\gamma(z) =\det \Gamma$ has the property
$\overline{\gamma}(\overline{z})=\gamma(z)$.
A substitution of $\psi_1$ and $\psi_2$ to the equation
$(\ref{3.30})$ gives
the explicit form of the Green's function $(\ref{3.31d})$.
\end{proof}

\begin{remark}
  If we substitute $\mathcal{E}_1$ and $\mathcal{E}_2$ from $(\ref{3.32d})$ to $(\ref{3.31d})$, we obtain the explicit form of Green's function that depends on $E_0,E_1, E_2$:
\begin{equation}
	\mathcal{G}_{z}(x,y;v_1,v_2, \alpha)=\mathcal{G}_z(x,y;\alpha)-\frac{1}{\gamma(z)}\sum_{j,k=0}^{2}\,E_j(x,z)c_{j,k}(z)\bar{E}_k(y,\bar{z}),
\end{equation}
where
\begin{equation}
E_0(x,z)=\mathcal{G}_z(x,-0;\alpha), \quad E_k(x,z)=\int\,\mathcal{G}_z(x,y;\alpha)v_k(y)\,dy,\ \ k=1,2
\end{equation}
and
\begin{equation}
\begin{array}{lll}c_{00}(z)=-2i\gamma_{22}+2ie^{i\alpha}\gamma_{12}+2i\gamma_{11}-2ie^{-i\alpha}\gamma_{21}& c_{01}(z)=\gamma_{22}+e^{-i\alpha}\gamma_{21} & c_{02}(z)=\gamma_{12}+e^{-i\alpha}\gamma_{11} \\
c_{10}(z)=-\gamma_{22}+e^{i\alpha}\gamma_{12}&c_{11}(z)=-\frac{i}{2}\gamma_{22} & c_{12}(z)=-\frac{i}{2}\gamma_{12} \\
c_{20}(z)=-e^{i\alpha}\gamma_{11}+\gamma_{21}&c_{21}(z)=\frac{i}{2}\gamma_{21} & c_{22}(z)=\frac{i}{2}\gamma_{11}.
\end{array}
\end{equation}
The values $\gamma_{jk}(z)$ are given in Theorem \ref{thm3.5}. The matrix  $||c_{jk}||_{jk=0}^{2}$ is  degenerate. Its first column is a linear combination of second and third ones:
$$\col(c_{00},c_{10},c_{20})=-2i\col(c_{01},c_{11},c_{21})+2ie^{i\alpha}\col(c_{02},c_{12},c_{22}).
$$
The equalities
 $c_{jk}(z)=\bar{c}_{kj}(\bar{z}),$\,\,$\gamma(z)=\bar{\gamma}(\bar{z})\neq 0$ are valid
 providing the self-adjoint property of the operator $A_{(v_1,v_2,\alpha)}:$
 $${\bar{\mathcal{G}}}_{\bar{z}}(y,x;v_1,v_2,\alpha)=\mathcal{G}_z(x,y;v_1,v_2,\alpha).
 $$
\end{remark}

\section{Spectral analysis of the momentum operator with a nonlocal potential}
An operator $A(v,\alpha)$ of the form (\ref{2.16}) with a nonlocal potential $v \in L_2(-\infty, +\infty)$ is self-adjoint in the space $L_2(-\infty, +\infty)$. Its resolvent
$R_z(v,\alpha)=(A(v,\alpha)-zI)^{-1}$ for nonreal $z$ is an integral operator with kernel $\mathcal{G}_z(x,y;v, \alpha)$ which is a perturbation of rank at most 2 of the Green's function of the free operator $A_0=i\frac{d}{dx}$ with a domain $W_2^1(-\infty, +\infty)$. The spectrum of the self-adjoint operator $A_0$ is absolutely continuous and is composed of the whole real axis. Therefore, the operator $A(v,\alpha)$, in addition to the absolutely continuous part of the spectrum, also has ordinary eigenvalues with eigenfunctions belonging to the space $L_2(-\infty, +\infty)$.
\begin{theorem}\label{4.1}
Let a self-adjoint operator $A(v,\alpha)$ of the form (\ref{2.16}) have a nonlocal potential $v \in L_2(-\infty, +\infty)$. Then the real number $\lambda$ is an eigenvalue of
operator $A(v,\alpha)$ if and only if the function
\begin{equation}\label{4.1}
   \psi_{\lambda}(x)=\theta(x)\left(-i\int_{x}^{\infty}\,e^{-i\lambda(x-y)}v(y)\,dy \right)+\theta(-x)\left(i\int_{-\infty}^{x}\,e^{-i\lambda(x-y)}v(y)\,dy\right)
\end{equation}
belongs to the space $L_2(R^1)$ and two conditions are satisfied
\begin{equation}\label{4.2}
\begin{array}{l}
    \psi_{\lambda}(-0) + e^{-i\alpha} \psi_{\lambda}(+0) =2,\\
     \psi_{\lambda}(-0)- e^{-i\alpha} \psi_{\lambda}(+0)=-i\langle \psi_{\lambda},v\rangle_{L_2}.
\end{array}
\end{equation}
If these conditions are satisfied, then the function $\psi_{\lambda}(x)$ is an eigenfunction of the operator $A(v,\alpha)$ with an eigenvalue $\lambda$
\begin{equation}\label{4.3}
A(v,\alpha) \psi_{\lambda}(x)=\lambda \psi_{\lambda}(x).
\end{equation}
  Each eigenvalue always has a multiplicity of one.
\end{theorem}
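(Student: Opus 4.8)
The plan is to reduce the eigenvalue equation $A(v,\alpha)\psi=\lambda\psi$ to a scalar first--order ODE carrying a single nonlocal constant, to solve it explicitly on each half--line, and then to identify the two scalar requirements in (\ref{4.2}) as a self--consistency condition for that constant together with the domain condition (\ref{2.17}). First I set $c=\tfrac12\psi(-0)+\tfrac{e^{-i\alpha}}2\psi(+0)$, so that by the form (\ref{2.16}) of the operator the equation $A(v,\alpha)\psi=\lambda\psi$ becomes
\[
i\psi'(x)+c\,v(x)=\lambda\psi(x),\qquad x\neq0,
\]
equivalently $\psi'+i\lambda\psi=ic\,v$. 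Using the integrating factor $e^{i\lambda x}$ one checks by direct differentiation that the function $\psi_\lambda$ in (\ref{4.1}) solves $i\psi_\lambda'(x)+v(x)=\lambda\psi_\lambda(x)$ on each half--line, so that $c\,\psi_\lambda$ is a particular solution; the homogeneous solutions are multiples of $e^{-i\lambda x}$, which for real $\lambda$ satisfy $|e^{-i\lambda x}|\equiv1$ and hence lie in neither $L_2(0,\infty)$ nor $L_2(-\infty,0)$.

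For the forward implication, let $\lambda$ be an eigenvalue with eigenfunction $\psi\neq0$ in the domain of $A(v,\alpha)$. Since $\psi\in L_2$ and $\psi'=-i\lambda\psi+ic\,v$ with $v\in L_2$, also $\psi'\in L_2$, so $\psi\in W_2^1$ on each half--line and therefore $\psi(x)\to0$ as $x\to\pm\infty$ by the one--dimensional Sobolev embedding. Integrating the ODE from $x$ to $+\infty$ for $x>0$ and from $-\infty$ to $x$ for $x<0$, and using this decay to discard the $e^{-i\lambda x}$ term, forces $\psi=c\,\psi_\lambda$. Here $c\neq0$, since $c=0$ would give $\psi=\mathrm{const}\cdot e^{-i\lambda x}\notin L_2$; consequently $\psi_\lambda=\psi/c\in L_2$. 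Substituting $\psi=c\,\psi_\lambda$ into the definition of $c$ and cancelling $c$ gives the first line of (\ref{4.2}), while inserting $\psi=c\,\psi_\lambda$ into the boundary condition (\ref{2.17}) and dividing by $ic$ gives the second line.

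Conversely, assume $\psi_\lambda$ in (\ref{4.1}) lies in $L_2$ and that both conditions (\ref{4.2}) hold. The first condition says precisely that $\tfrac12\psi_\lambda(-0)+\tfrac{e^{-i\alpha}}2\psi_\lambda(+0)=1$, so the value $c=1$ used in the differentiation above is self--consistent and $A(v,\alpha)\psi_\lambda=\lambda\psi_\lambda$ holds pointwise; the second condition is exactly (\ref{2.17}), so $\psi_\lambda$ lies in the domain, and $\psi_\lambda\in W_2^1(\Omega)$ because $\psi_\lambda,\psi_\lambda'\in L_2$. Moreover $\psi_\lambda\not\equiv0$, since the first condition gives $\psi_\lambda(-0)+e^{-i\alpha}\psi_\lambda(+0)=2\neq0$. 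Hence $\lambda$ is an eigenvalue. Finally, the representation $\psi=c\,\psi_\lambda$ established in the forward part shows that every eigenfunction is a scalar multiple of $\psi_\lambda$, so the eigenspace is one--dimensional and each eigenvalue has multiplicity one.

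The step I expect to be the main obstacle is ruling out the homogeneous solution $e^{-i\lambda x}$: one must ensure that membership in $L_2$, rather than mere formal solvability of the ODE, forces the coefficient of $e^{-i\lambda x}$ to vanish. This is exactly where the Sobolev decay $\psi(x)\to0$ at $\pm\infty$ is needed, and it is also what justifies the convergence of the improper integrals defining $\psi_\lambda$ in (\ref{4.1}), since for real $\lambda$ the unit--modulus exponential is precisely the mode excluded by the $L_2$ constraint.
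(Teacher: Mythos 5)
Your proposal is correct and follows essentially the same route as the paper: reduce the eigenvalue equation to the scalar ODE $i\psi'+c\,v=\lambda\psi$ with $c=\tfrac12\psi(-0)+\tfrac{e^{-i\alpha}}{2}\psi(+0)$, use the $L_2$ constraint to kill the unit-modulus homogeneous mode $e^{-i\lambda x}$ (equivalently, to show $c\neq 0$ and normalize $c=1$), and read off the two conditions in (\ref{4.2}) from the normalization and the boundary condition (\ref{2.17}). Your version merely makes explicit two points the paper leaves implicit — the Sobolev decay $\psi(x)\to 0$ that justifies discarding the homogeneous solution and the conditional convergence of the improper integrals in (\ref{4.1}) — so no further changes are needed.
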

\begin{proof}
If $\lambda$ is an eigenvalue of the operator $A(v,\alpha)$, then its eigenfunction is satisfied (\ref{4.3}). Therefore, for $x\neq 0 $ the equation holds
\begin{equation}\label{4.4}
i\frac{d \psi_{\lambda}(x)}{dx}+v(x)\frac{1}{2} \Bigl( \psi_{\lambda}(-0)+e^{-i \alpha} \psi_{\lambda}(+0)\Bigr)=\lambda \psi_{\lambda}(x),\quad x\neq 0.
\end{equation}
The value $ \psi_{\lambda}(-0)+e^{-i\lambda} \psi_{\lambda}(+0)\neq0$ because then the nontrivial solution (\ref{4.4}) does not belong to the space $L_2 $. Since the eigenfunction $\psi_{\lambda}$ is determined up to a scalar non-zero factor, then, without a loss of generality, we can assume that the first condition in (\ref{4.2}) is satisfied.
Then the solution (\ref{4.4}) takes the form (\ref{4.1}). This solution decreases as $|x|\rightarrow \infty$, but must belong to the space $L_2.$ In addition, since $\psi_{\lambda} \in \mathcal{D}(A(v,\alpha))$, then the second condition in (\ref{4.2}) holds.
Otherwise, if the conditions (\ref{4.1})--(\ref{4.2}) are satisfied, then the function $ \psi_{\lambda}(x)$ of the form (\ref{4.1}) is a solution (\ref{4.4}), i.e., it is an eigenfunction with an eigenvalue $\lambda$. Since the eigenfunction for the eigenvalue $\lambda$ takes the form (\ref{4.1}), then  the eigenvalue $\lambda$ has a multiplicity one.
\end{proof}
We will demonstrate the efficiency of the conditions in Theorem \ref{4.1} using the following examples.
\begin{example}\label{4.1}
Let the potential $v(x)$ in the operator $A(v,\alpha)$ be equal to the complex constant $k$ in the interval $x\in(0,1)$ and $v(x)\equiv0$ for $x\notin [0,1].$ Then the operator $A(v,\alpha)$ has an eigenvalue  $\lambda=0$  only with $k=2ie^{i\alpha}.$ In this case, the eigenvalue is unique.
\end{example}
\begin{proof}
It is easy to check all the conditions in Theorem \ref{4.1} with $\lambda=0$ and $k=2ie^{i\alpha}.$ The eigenfunction $\psi_0(x)=2e^{i\alpha}(1-x)$ with $ x\in(0,1)$ and $\psi_0(x)\equiv0$ with $x \notin [ 0,1].$ If we assume $\lambda \neq 0$  with chosen $k$, then the first condition in (\ref{4.2}) has the form $e^{-i\alpha} (2ie^{i\alpha}\frac{1-e^{i\alpha}}{\lambda})=2. $
The modulus of the left-hand side in this equality is equal to $\displaystyle 2\frac{\sin \frac{\lambda}{2}}{\frac{\lambda}{2}}$ and  is strictly less than 2 for $\lambda \neq 0$. Therefore, there are no other eigenvalues for the chosen potential, except $\lambda=0$ .
\end{proof}
\begin{example}\label{4.2}
Let the nonlocal potential be $v(x)=\theta(x)ke^{-(1+i\gamma)x}$, where $\gamma$ is a real number. If $k=e^{i\alpha}2i$, then  the number $\lambda=\gamma$ is the only eigenvalue with the eigenfunction $\psi_{\gamma}(x)=\theta(x)(-ike^{-x})$ of the self-adjoint operator $A(v,\alpha)$ with such
  nonlocal potential.
\end{example}
\begin{proof}
From Theorem \ref{4.1}, we have
$$\psi_{\lambda}=\theta(x)\Bigl(-i\int_{x}^{\infty}\,e^{-i\alpha(\lambda-y)}v(y)\,dy\Bigr)=\theta(x)\frac{1}{1+i(\gamma-\lambda)}(-iv(x)).
$$
 It is  easy to check that conditions (\ref{4.2}) are satisfied only  for $\lambda=\gamma$.
\end{proof}
\begin{example}\label{4.3}
An operator $A(v,0)$ with a nonlocal potential $v(x)=2i\sign x\cdot e^{-|x|}$ is a self-adjoint operator in the space $L_2(-\infty,+\infty)$. This operator has only two simple (non-multiple) eigenvalues
$\lambda=1,$ \, $\lambda=-1$ and absolutely continuous  spectrum which is the entire real line.  The eigenfunctions for the specified eigenvalues can be represented as
\begin{equation}\label{}
  \psi_{\lambda}(x)=\frac{2}{1+\lambda^2}e^{-|x|}(1+i\lambda\sign x).
\end{equation}
\end{example}
\begin{proof}
The specified function $\psi_{\lambda}\in L_2(-\infty,+\infty)$ satisfies equation
$$
i\frac{d\psi_{\lambda}}{dx}+v(x)=\lambda\psi_{\lambda},\quad x\neq 0
$$
 and conditions
  $$\psi_{\lambda}(+0)=1+i\lambda,\quad \psi_{\lambda}(-0)=1-i\lambda,\quad \langle\psi_{\lambda},v\rangle=2 \quad\text{with}\,\,\lambda=1 \,\,\,\text{and}\,\,\,\lambda=-1.
  $$
  Therefore, the function $\psi_{\lambda}$ belongs to the domain  of the operator $A(v,0)$ and satisfies equality
  $$A(v,0) \psi_{\lambda}= \lambda  \psi_{\lambda}\quad\text{with}\,\,\,\,\lambda=1\,\,\, \,\text{and}\,\,\,\,\lambda=-1.
  $$
  Therefore, $ \psi_{\lambda}$ is an eigenfunction of the operator $A(v,0)$  with eigenvalues $\lambda=1,$ \, $\lambda=-1$.
   Since the condition
   $$\frac{1}{2}( \psi_{\lambda}(-0)+ \psi_{\lambda}(+0))=2
   $$
    is satisfied only for $\lambda=1$ and $\lambda=-1,$ then the operator  $A(v,0)$  has no other eigenvalues.
\end{proof}
\section{Operators on a finite interval}

 Consider  the  first order differential expression  $A (v_1,v_2)$ on a finite interval $\Omega=(0,1)$  with nonlocal potentials $v_1$ and $v_2,$
 where $v_1$ and $v_2$    are complex-valued functions from the space~$ L_2 (\Omega)$.
 The action of the expression $A (v_1,v_2)$  on an arbitrary $\psi$ function from the Sobolev space $W_2^1(\Omega)$ is given by
\begin{equation}\label{21.1}
A(v_1,v_2)\psi(x)=i\frac{d\psi(x)}{dx}+v_1(x)\left[\psi(0)-\frac{i}{2}\langle \psi,v_1\rangle_{L_2}\right]+v_2(x)\left[\psi(1)+\frac{i}{2}\langle \psi,v_2\rangle_{L_2}\right].
\end{equation}

The equality $(\ref{21.1})$ defines a maximal operator in the space $ L_2 (\Omega)$, whose domain is the space $W_2^1 (\Omega) $. This operator will be denoted by  $A (v_1,v_2)$.

\begin{lemma}\label{5.1}
The operator  $A (v_1,v_2)$ satisfies the Green's formula, which for two arbitrary functions $\psi,\varphi\in W_2^1 (\Omega)$ has the form
\begin{equation}\label{21.2}
   \begin{array}{rr}
\langle A\psi,\varphi\rangle_{L_2}-\langle\psi,A\varphi\rangle_{L_2}=i\{\left[\psi(1)+i\langle\psi,v_2\rangle\right]\cdot\left[\varphi(1)+i\langle\varphi,v_2\rangle\right]^*-\\
-\left[\psi(0)-i\langle\psi,v_1\rangle\right]\cdot\left[\varphi(0)-i\langle\varphi,v_1\rangle\right]^*\}.
     \end{array}
 \end{equation}
\end{lemma}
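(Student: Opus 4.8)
The plan is to split $A(v_1,v_2)$ into its differential part $i\,d/dx$ and its nonlocal part
$$
P\psi = v_1(x)\Bigl[\psi(0)-\tfrac{i}{2}\langle\psi,v_1\rangle\Bigr]+v_2(x)\Bigl[\psi(1)+\tfrac{i}{2}\langle\psi,v_2\rangle\Bigr],
$$
and to treat the two contributions to $\langle A\psi,\varphi\rangle-\langle\psi,A\varphi\rangle$ separately, in exactly the same spirit as the whole-axis Green's formula $(\ref{2.211})$. For the differential part, integration by parts over $(0,1)$ gives
$$
\langle i\psi',\varphi\rangle-\langle\psi,i\varphi'\rangle = i\int_0^1\frac{d}{dx}\bigl(\psi\,\overline{\varphi}\bigr)\,dx = i\bigl[\psi(1)\overline{\varphi(1)}-\psi(0)\overline{\varphi(0)}\bigr],
$$
which is the genuine boundary term. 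Here one uses that $\langle\psi,i\varphi'\rangle=-i\langle\psi,\varphi'\rangle$ because the inner product is conjugate-linear in the second slot; keeping track of this sign is the only subtlety in this step.

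Next I would compute the nonlocal contribution directly. Writing $P\psi=v_1a+v_2b$ with $a=\psi(0)-\tfrac{i}{2}\langle\psi,v_1\rangle$ and $b=\psi(1)+\tfrac{i}{2}\langle\psi,v_2\rangle$, and similarly $P\varphi=v_1a'+v_2b'$, the scalars $a,b$ factor out of the first slot and $\overline{a'},\overline{b'}$ out of the second, so that
$$
\langle P\psi,\varphi\rangle-\langle\psi,P\varphi\rangle = a\langle v_1,\varphi\rangle + b\langle v_2,\varphi\rangle - \overline{a'}\langle\psi,v_1\rangle - \overline{b'}\langle\psi,v_2\rangle.
$$
Since $a$ is paired only with $\langle v_1,\varphi\rangle$ and $b$ only with $\langle v_2,\varphi\rangle$, no mixed $v_1$--$v_2$ products appear. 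Expanding and using $\langle v_j,\varphi\rangle=\overline{\langle\varphi,v_j\rangle}$, the two half-weighted quadratic terms in each potential add up, turning each $\tfrac{i}{2}$ into a full $i$, and one obtains
$$
\langle P\psi,\varphi\rangle-\langle\psi,P\varphi\rangle = \psi(0)\langle v_1,\varphi\rangle + \psi(1)\langle v_2,\varphi\rangle - \langle\psi,v_1\rangle\overline{\varphi(0)} - \langle\psi,v_2\rangle\overline{\varphi(1)} - i\langle\psi,v_1\rangle\langle v_1,\varphi\rangle + i\langle\psi,v_2\rangle\langle v_2,\varphi\rangle.
$$

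Finally I would expand the claimed right-hand side of $(\ref{21.2})$. Since $[\varphi(1)+i\langle\varphi,v_2\rangle]^*=\overline{\varphi(1)}-i\langle v_2,\varphi\rangle$ and $[\varphi(0)-i\langle\varphi,v_1\rangle]^*=\overline{\varphi(0)}+i\langle v_1,\varphi\rangle$, multiplying out $i\bigl([\psi(1)+i\langle\psi,v_2\rangle]\,[\cdots]^*-[\psi(0)-i\langle\psi,v_1\rangle]\,[\cdots]^*\bigr)$ produces precisely the boundary term $i[\psi(1)\overline{\varphi(1)}-\psi(0)\overline{\varphi(0)}]$ together with the six nonlocal terms just found. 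Adding the differential and nonlocal contributions and matching term by term then yields the asserted identity. The main obstacle is purely bookkeeping: correctly conjugating the inner products, tracking the powers of $i$ produced when the $\pm\tfrac{i}{2}$ weights meet the $\pm i$ weights in the boundary functionals, and confirming that no cross terms appear so that exactly the quadratic terms $-i\langle\psi,v_1\rangle\langle v_1,\varphi\rangle$ and $+i\langle\psi,v_2\rangle\langle v_2,\varphi\rangle$ survive to be absorbed by the $i\langle\psi,v_j\rangle\,\overline{i\langle\varphi,v_j\rangle}$ pieces of the bilinear boundary form.
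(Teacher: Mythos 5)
Your proof is correct and follows the same route as the paper, which simply verifies the identity by substituting the definition of $A(v_1,v_2)$ and expanding both sides; you have merely carried out the bookkeeping (integration by parts for the differential part plus the direct expansion of the rank-two perturbation) explicitly, and all the signs and conjugations check out.
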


\begin{proof}
Substituting  $(\ref{21.1})$ into the left-hand side of $(\ref{21.2})$, it shows the same as expanding the parentheses in the right-hand side of $(\ref{21.2})$.
\end{proof}

\begin{definition}\label{df5.2}
 For the operator  $A (v_1,v_2)$ and a real number $\alpha\in[0,2\pi),$  consider  the operator  $A (v_1,v_2,\alpha)$  in the space $ L
 _2 (\Omega)$ as a restriction of operator  $A (v_1,v_2)$ on all functions of space $W_2^1 (\Omega)$  that satisfy the boundary condition
\begin{equation}\label{21.3}
\psi(1)+i\langle\psi,v_2\rangle_{L_2}=e^{i\alpha}[\psi(0)-i\langle\psi,v_1\rangle_{L_2}].
\end{equation}
We will denote the operator   $A (0, 0,\alpha)$  by $A_{\alpha}$ in the case $v_1\equiv 0$ and $v_2\equiv 0.$
\end{definition}
\begin{lemma}\label{5.2}
The operator $A(v_1,v_2,\alpha)$ is symmetric in the space $L_2(\Omega).$
\end{lemma}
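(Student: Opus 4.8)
The plan is to follow the two--step template of Lemma~\ref{thm2.2}: first verify the symmetric identity $\langle A(v_1,v_2,\alpha)\psi,\varphi\rangle=\langle\psi,A(v_1,v_2,\alpha)\varphi\rangle$ on the domain, and then show that the domain is dense in $L_2(\Omega)$. For the identity I would start from the Green's formula $(\ref{21.2})$ of Lemma~\ref{5.1}. Writing $a_\psi=\psi(0)-i\langle\psi,v_1\rangle$ and $b_\psi=\psi(1)+i\langle\psi,v_2\rangle$, the defining boundary condition $(\ref{21.3})$ reads simply $b_\psi=e^{i\alpha}a_\psi$, and the right--hand side of $(\ref{21.2})$ becomes $i\{b_\psi\,\overline{b_\varphi}-a_\psi\,\overline{a_\varphi}\}$. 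When both $\psi$ and $\varphi$ lie in the domain, substituting $b_\psi=e^{i\alpha}a_\psi$ and $b_\varphi=e^{i\alpha}a_\varphi$ gives $b_\psi\,\overline{b_\varphi}=e^{i\alpha}a_\psi\cdot e^{-i\alpha}\overline{a_\varphi}=a_\psi\,\overline{a_\varphi}$, so the bracket vanishes and the symmetric identity follows by a one--line cancellation.

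The substantive part is density. I would introduce the minimal operator $A_{\min}$ whose domain consists of $\psi\in W_2^1(\Omega)$ satisfying $a_\psi=0$ and $b_\psi=0$ separately; these two equalities force $b_\psi=e^{i\alpha}a_\psi$, so this domain is contained in the domain of $A(v_1,v_2,\alpha)$, and it suffices to prove that $A_{\min}$ is densely defined. The compactly supported $C^1$ functions $\psi_0$ on $(0,1)$ with $\psi_0(0)=\psi_0(1)=0$ are dense in $L_2(\Omega)$, so I would approximate each such $\psi_0$ by functions in the domain of $A_{\min}$. For this I would use two boundary bumps: a profile supported near $x=0$, namely $\delta_n^{(+)}(x)=\delta^{(+)}(nx)$ with support in $[0,1/n]$, and a profile supported near $x=1$, namely $\delta_n^{(-)}(x)=\delta^{(-)}(n(x-1))$ with support in $[1-1/n,1]$, and set $\psi_n=\psi_0+C_n^{(+)}\delta_n^{(+)}+C_n^{(-)}\delta_n^{(-)}$. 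Since for large $n$ the two supports are disjoint and each contains exactly one endpoint, one has $\psi_n(0)=C_n^{(+)}$ and $\psi_n(1)=C_n^{(-)}$.

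Imposing $a_{\psi_n}=0$ and $b_{\psi_n}=0$ then yields a $2\times2$ linear system for $C_n^{(+)}$, $C_n^{(-)}$ whose off--diagonal coefficients are the pairings $\langle\delta_n^{(\pm)},v_j\rangle$. Because $\|\delta_n^{(\pm)}\|_{L_2}=O(n^{-1/2})$, the Cauchy--Schwarz inequality gives $|\langle\delta_n^{(\pm)},v_j\rangle|=O(n^{-1/2})$, so for large $n$ the system is a small perturbation of the identity, is uniquely solvable, and its solutions converge to the finite values determined by $\langle\psi_0,v_1\rangle$ and $\langle\psi_0,v_2\rangle$; in particular $C_n^{(+)}$ and $C_n^{(-)}$ stay bounded. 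Then $\|\psi_0-\psi_n\|\le|C_n^{(+)}|\,\|\delta_n^{(+)}\|+|C_n^{(-)}|\,\|\delta_n^{(-)}\|=O(n^{-1/2})\to0$, which proves that the domain of $A_{\min}$, hence that of $A(v_1,v_2,\alpha)$, is dense. Combined with the cancellation above, this yields symmetry. I expect the main obstacle to be exactly this density estimate: one must verify that the two endpoint bumps remain in $W_2^1(\Omega)$ and that the coefficient system stays nondegenerate with uniformly bounded solutions --- the same bookkeeping as in Lemma~\ref{thm2.2}, now adapted to the two endpoints $0$ and $1$ of the finite interval instead of the single point $x=0$.
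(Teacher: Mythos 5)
Your proof is correct and follows the paper's intended route: the paper disposes of this lemma in one line as ``a simple corollary of Lemma~\ref{5.1}'', i.e.\ exactly your cancellation $b_\psi\overline{b_\varphi}=e^{i\alpha}a_\psi\,e^{-i\alpha}\overline{a_\varphi}=a_\psi\overline{a_\varphi}$ in the Green's formula. You additionally supply the density-of-domain argument (which the paper omits here but carries out for the whole-axis case in Lemma~\ref{thm2.2}), and your adaptation of the two endpoint bumps to $x=0$ and $x=1$ is sound.
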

This is a simple corollary of Lemma \ref{5.1}.
\begin{remark}\label{rem5.1}
In Definition \ref{df5.2}  of operator $A(v_1,v_2,\alpha)$ with nonlocal potentials $v_1,\,v_2 \in L_2(0,1),$ the domain  of the operator is defined as the set of all functions $\psi \in W_2^1(0,1)$ satisfying the boundary condition
$$\psi(1)-e^{i\alpha}\psi(0)=-i\Bigl[\langle\psi,v_2\rangle+e^{i\alpha}\langle\psi,v_1\rangle\Bigr].
$$
 This condition can be equivalently represented by  the functionals
 $$\psi_1=\psi(0)-\frac{i}{2}\langle\psi,v_1\rangle,\quad \psi_2=\psi(1)+\frac{i}{2}\langle\psi,v_2\rangle
 $$
 included in the definition of the operator $A(v_1,v_2,\alpha)$  in the form
 \begin{equation}\label{}
   \psi(1)-e^{i\alpha}\psi(0)=2[\psi_2-e^{i\alpha}\psi_1].
 \end{equation}
\end{remark}
\begin{theorem}\label{2th1}
The operator $ A_{\alpha}$ is a self-adjoint operator in the space $L_2 (\Omega)$. The spectrum of this operator consists of eigenvalues, all numbers of the form
\begin{equation}\label{21.4}
\lambda_n=-\alpha+2\pi n,\quad \textrm{where}\quad n=0,\pm 1,\pm 2,\ldots,
\end{equation}
and the eigenfunctions are $\psi_n(x)=e^{-i\lambda_nx}.$
For any values of the complex number $z$  that differ from the indicated eigenvalues, there is a resolvent  $(A_{\alpha}-zI)^{-1}$, which is an integral operator with a kernel $g(x,y;z)$, that is,
\begin{equation}\label{21.5}
(A_{\alpha}-zI)^{-1}h(x)=\int_0^1g(x,y;z)h(y)dy.
\end{equation}
The kernel $g(x,y;z)$ is explicitly expressed as
\begin{equation}\label{21.6}
g(x,y;z)=e^{-iz(x-y)}[-i\theta(x-y)+\beta],\quad \beta=\frac{ie^{-iz}}{e^{-iz}-e^{i\alpha}}.
\end{equation}
\end{theorem}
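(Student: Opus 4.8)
The plan is to observe first that $A_\alpha = A(0,0,\alpha)$ is, by Definition \ref{df5.2} with $v_1\equiv v_2\equiv 0$, nothing but the operator $i\frac{d}{dx}$ acting on $W_2^1(0,1)$ subject to the boundary condition obtained from (\ref{21.3}) by dropping the nonlocal terms, namely $\psi(1)=e^{i\alpha}\psi(0)$. Symmetry is already handed to us by Lemma \ref{5.2}, so the substantive work is a single explicit computation: I would construct the resolvent for all $z$ away from the claimed eigenvalues, and then read off self-adjointness, the eigenvalues, and the kernel (\ref{21.6}) from that one calculation.

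First I would dispose of the eigenvalues. The equation $i\psi'=\lambda\psi$ on $(0,1)$ has the one-parameter family of solutions $\psi(x)=Ce^{-i\lambda x}$, and imposing $\psi(1)=e^{i\alpha}\psi(0)$ forces $e^{-i\lambda}=e^{i\alpha}$, i.e. $\lambda=-\alpha+2\pi n$ with $n\in\mathbb{Z}$. This produces exactly the values $\lambda_n$ and the eigenfunctions $\psi_n(x)=e^{-i\lambda_n x}$; since the solution space of this first-order equation is one-dimensional, each eigenvalue is simple.

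Next, for $z$ not of this form, I would solve the inhomogeneous problem $i\psi'-z\psi=h$ with the same boundary condition. Multiplying by the integrating factor $e^{izx}$ turns the equation into $(e^{izx}\psi)'=-ie^{izx}h$, whence $\psi(x)=e^{-izx}\psi(0)-i\int_0^x e^{-iz(x-y)}h(y)\,dy$. Substituting this into $\psi(1)=e^{i\alpha}\psi(0)$ gives $(e^{-iz}-e^{i\alpha})\psi(0)=i\int_0^1 e^{-iz(1-y)}h(y)\,dy$, which determines $\psi(0)$ uniquely precisely when $e^{-iz}\neq e^{i\alpha}$, that is, exactly when $z$ is not one of the $\lambda_n$. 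Back-substituting $\psi(0)$, rewriting $e^{-iz(1-y)}=e^{-iz}e^{izy}$, and merging the two $y$-integrals over $(0,1)$ yields $\psi(x)=\int_0^1 e^{-iz(x-y)}[-i\theta(x-y)+\beta]\,h(y)\,dy$ with $\beta=\frac{ie^{-iz}}{e^{-iz}-e^{i\alpha}}$, which is the kernel (\ref{21.6}).

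Finally I would harvest the structural conclusions. Any nonreal $z$ satisfies $e^{-iz}\neq e^{i\alpha}$, so the construction shows $\mathrm{Ran}(A_\alpha-zI)=L_2(\Omega)$ for every nonreal $z$; combined with the symmetry from Lemma \ref{5.2}, this gives deficiency indices $(0,0)$ and hence self-adjointness of $A_\alpha$. The kernel $g(\cdot,\cdot;z)$ is bounded on the bounded square $(0,1)^2$, so the resolvent is Hilbert--Schmidt and therefore compact; a self-adjoint operator with compact resolvent has purely discrete spectrum, so the spectrum consists solely of the eigenvalues $\lambda_n$ already enumerated, with no continuous part. The ODE integration is entirely routine; the only step deserving care is this last compactness argument, which is what rules out any continuous spectrum and certifies that the list $\{\lambda_n\}$ is exhaustive.
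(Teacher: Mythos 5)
Your proposal is correct and follows essentially the same route as the paper: solve $i\psi'-z\psi=h$ by the integrating factor, fix $\psi(0)$ (the paper's constant $C$) from the boundary condition $\psi(1)=e^{i\alpha}\psi(0)$, which is possible exactly when $e^{-iz}\neq e^{i\alpha}$, and read off the kernel (\ref{21.6}), the eigenvalues $\lambda_n$, and self-adjointness via symmetry plus surjectivity of $A_\alpha-zI$ for nonreal $z$. Your closing Hilbert--Schmidt/compact-resolvent argument is a valid extra certification that the spectrum is purely discrete, though it is not strictly needed: the explicit bounded resolvent for \emph{every} $z\notin\{\lambda_n\}$ (real or not) already shows the spectrum is exactly $\{\lambda_n\}$, which is how the paper leaves it.
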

\begin{proof}
Let us prove representations $(\ref{21.5})$, $(\ref{21.6})$.  For this, consider the boundary value problem
\begin{equation}\label{21.7}
i\frac{d\psi(x)}{dx}-z\psi(x)=h(x),\quad \psi(1)=e^{i\alpha}\psi(0).
\end{equation}
The general solution of equation $(\ref{21.7})$ has the form
\begin{equation}\label{21.8}
\psi(x)=Ce^{-izx}-i\int_0^xe^{-iz(x-y)}h(y)dy,
\end{equation}
where $C$ is an arbitrary constant.  We choose it so that the solution $(\ref{21.8})$ satisfies the boundary conditions $(\ref{21.7})$.  This leads to the following equation, where $C:$
\begin{equation}\label{21.9}
C[e^{-iz}-e^{i\alpha}]=ie^{-iz}\int_0^1e^{izy}h(y)dy.
\end{equation}
If $z\neq \lambda_n,$ then there is a unique solution of equation $(\ref{21.9})$.  Substituting this value of $C$ in $(\ref{21.8})$, we obtain $(\ref{21.5})$ and $(\ref{21.6})$. The self-adjointness of the operator $A_{\alpha }$ follows from its symmetry due to the Green's formula and the existence of bounded operators $(A_{\alpha}-zI)^{-1}$ for all non-real~$z$. Note that each eigenvalue $\lambda_n $ has its own eigenfunction
\begin{equation}\label{21.10}
\psi_n(x)=e^{-i\lambda_n x}.
\end{equation}
\end{proof}
\begin{remark}\label{5.2}
The free moment operator $L=i\frac{d}{dx}$ is self-adjoint on the space $L_2(-\infty,\infty)$ with the domain $\mathcal{D}(L)=W_2^1(-\infty,\infty).$ The resolvent $(L-zI)^{-1},$ if $\Im z\neq 0$, is the integral operator
$$(L-zI)^{-1}h=\int g_z(x-y)h(y)dy,
$$
where the kernel of the integral operator is Green's function
\begin{equation}
g_z(x)=i\sign(\Im z)\cdot\theta(-\Im z  x) e^{-izx}.
\end{equation}
The  Green's function $g(x,y;z)$ of the form $(\ref{21.6})$ is a rank one  perturbation of function $g_z:$
\begin{equation}
g(x,y;z)=g_z(x-y)+e^{-izx}(\beta-i\theta(\Im z))e^{izy}= g_z(x-y)+e(x,z)w(z,\alpha)\bar{e}(y,\bar{z}),
\end{equation}
where $$
\begin{array}{l}
\displaystyle e(x,z)=g_z(x)+g_z(x-1), \quad w(z,\alpha)=i\left[\frac{1}{e^{iz}-e^{-i\alpha}}\theta(\Im z)-\frac{1}{e^{-iz}-e^{i\alpha}}\theta(-\Im z)\right],\\[2mm]
 \bar{w}(\bar{z},\alpha)=w(z,\alpha).
\end{array}
$$
\end{remark}
\begin{remark}
From the explicit form $(\ref{21.6})$ of the resolvent kernel $g(x,y;z)$, we have the following equality
\begin{equation}\label{21.11}
g(1,y;z)=e^{i\alpha}g(0,y;z)=e^{i\alpha}\beta(z)e^{izy},\quad \textrm{where}\quad
\beta=\frac{ie^{-iz}}{e^{-iz}-e^{i\alpha}}
\end{equation}
\begin{equation}\label{21.12}
\overline{g}(y,x;z)=g(x,y;\overline{z}).
\end{equation}
Note that the equality
$(\ref{21.12})$
is equivalent to the equality of the operators
$\left[(A_{\alpha}-zI)^{-1}\right]^*=(A_{\alpha}-\overline{z}I)^{-1}$,
which is equivalent to a self-adjointness of the operator $A_{\alpha}$.
\end{remark}
\begin{theorem}\label{2th2}
Consider two operators with several potentials $A(v_1,v_2,\alpha)$ and $A(v,0,\alpha)$, where $v(x)=v_1(x)+e^{i\alpha}v_2(x)$.
The domains of definition of these operators are equal and  described by the appropriate boundary conditions
$(\ref{21.3})$
\begin{equation}\label{21.13}
\psi(1)+i\langle\psi,v_2\rangle=e^{i\alpha}[\psi(0)-i\langle\psi,v_1\rangle];\quad
\psi(1)=e^{i\alpha}[\psi(0)-i\langle\psi,v\rangle].
\end{equation}
The difference between these operators is bounded self-adjoint operator of rank $2$:
\begin{equation}\label{21.14}
[A(v_1,v_2,\alpha)-A(v,0,\alpha)]\psi=\frac{i}{2}e^{-i\alpha}v_1\langle\psi,v_2\rangle-\frac{i}{2}e^{i\alpha}v_2\langle\psi,v_1\rangle.
\end{equation}
\end{theorem}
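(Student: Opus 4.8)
The plan is to treat this statement as the finite-interval counterpart of Theorem~\ref{thm2.3} and to prove it by a direct algebraic computation in two stages: first the equality of the two domains, then the explicit evaluation of the operator difference on that common domain.

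First I would establish that the two boundary conditions in $(\ref{21.13})$ cut out the same subspace of $W_2^1(\Omega)$. Writing $v=v_1+e^{i\alpha}v_2$ and using the conjugate-linearity of the inner product in its second argument gives $\langle\psi,v\rangle=\langle\psi,v_1\rangle+e^{-i\alpha}\langle\psi,v_2\rangle$. Substituting this into the condition $\psi(1)=e^{i\alpha}[\psi(0)-i\langle\psi,v\rangle]$ for $A(v,0,\alpha)$ reproduces verbatim the condition $\psi(1)+i\langle\psi,v_2\rangle=e^{i\alpha}[\psi(0)-i\langle\psi,v_1\rangle]$ for $A(v_1,v_2,\alpha)$. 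Hence the two domains coincide, so the difference operator in $(\ref{21.14})$ is well defined on this common domain.

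Next I would compute the difference directly from the defining expression $(\ref{21.1})$. Both operators contain the same term $i\,d\psi/dx$, so the derivatives cancel and only the three nonlocal terms survive. Collecting the coefficients of $v_1$ yields $\tfrac{i}{2}v_1\langle\psi,v-v_1\rangle=\tfrac{i}{2}e^{-i\alpha}v_1\langle\psi,v_2\rangle$ at once, using $v-v_1=e^{i\alpha}v_2$. The coefficient of $v_2$, however, contains the boundary quantity $\psi(1)-e^{i\alpha}\psi(0)$, which is \emph{not} zero but is pinned down by membership in the common domain.

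The main obstacle, such as it is, lies in this last step: one must invoke the boundary condition $\psi(1)-e^{i\alpha}\psi(0)=-i\langle\psi,v_2\rangle-ie^{i\alpha}\langle\psi,v_1\rangle$ to remove the boundary values, and then simplify with $e^{i\alpha}\langle\psi,v\rangle=e^{i\alpha}\langle\psi,v_1\rangle+\langle\psi,v_2\rangle$. The $\langle\psi,v_2\rangle$ contributions cancel and the $\langle\psi,v_1\rangle$ terms collapse to $-\tfrac{i}{2}e^{i\alpha}v_2\langle\psi,v_1\rangle$, producing exactly $(\ref{21.14})$. Finally I would observe that the result has the rank-two form $K\psi=c\,v_1\langle\psi,v_2\rangle+\bar c\,v_2\langle\psi,v_1\rangle$ with $c=\tfrac{i}{2}e^{-i\alpha}$; a one-line verification that $\langle K\psi,\varphi\rangle=\langle\psi,K\varphi\rangle$ gives symmetry, and finite rank gives boundedness, so $K$ is a bounded self-adjoint operator of rank $2$, completing the claim.
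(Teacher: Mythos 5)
Your proposal is correct and follows essentially the same route as the paper: check that the two boundary conditions in (\ref{21.13}) coincide via $\langle\psi,v\rangle=\langle\psi,v_1\rangle+e^{-i\alpha}\langle\psi,v_2\rangle$, subtract the two defining expressions (\ref{21.1}), use the boundary condition to eliminate $\psi(1)-e^{i\alpha}\psi(0)$, and read off the resulting pair of mutually adjoint rank-one terms. Your write-up is in fact somewhat more explicit than the paper's about the cancellation of the $\langle\psi,v_2\rangle$ contributions; the only cosmetic caveat, shared with the paper, is that the conclusion should strictly read rank \emph{at most} $2$.
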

\begin{proof}
Two boundary conditions $(\ref{21.13})$ for the specified connection
$v=v_1+e^{i\alpha}v_2$ coincide.
According to $(\ref{21.1})$ the following equality is true
\begin{equation}\label{21.15}
[A(v_1,v_2,\alpha)-A(v,0,\alpha)]\psi=v_1[\psi(0)-\frac{i}{2}\langle\psi,v_1\rangle]+v_2[\psi(1)+\frac{i}{2}\langle\psi,v_2\rangle]-v[\psi(0)-\frac{i}{2}\langle\psi,v\rangle].
\end{equation}
If we substitute $\psi(1)$ from
$(\ref{21.13})$
into this equality and take into account the explicit form of
$v=v_1+e^{i\alpha}v_2$
we get
$(\ref{21.14})$.
The right side of $(\ref{21.14})$
is the sum of two one-dimensional mutually adjoint operators. Therefore, it is a bounded self-adjoint operator of rank at most $2$.
\end{proof}
\begin{remark}\label{k5}
Due to Theorem $\ref{2th2}$ we have
$$A\left(\frac{1}{2}v,\frac{e^{-i\alpha}}{2}v, \alpha\right)-A(v,0,\alpha)\equiv 0.$$
We will denote the operator $A(\frac{1}{2}v,\frac{e^{-i\alpha}}{2}v, \alpha)$ by $A(v,\alpha). $ Its action is expressed as follows
\begin{equation}
	A(v,\alpha)\psi(x)= i\frac{d\psi(x)}{dx}+\frac{1}{2}v(x)\left[\psi(0)+e^ {-i\alpha} \psi(1)\right]
\end{equation}
with the boundary condition
\begin{equation}
	ie^{-i\alpha}\psi(1)-i\psi(0)=\langle\psi,v\rangle.
\end{equation}
\end{remark}

\begin{theorem}\label{thm5.3}
The operator $A(v_1,v_2,\alpha)$ from Definition  \ref{df5.2}
with nonlocal potentials $v_1,v_2 \in L_2(0,1)$ is self--adjoint in $L_2(0,1)$.
 For nonreal $z$,  the resolvent $(A(v_1,v_2,\alpha)-zI)^{-1}$ is an~integral operator with the kernel
\begin{equation}\label{eq5a}
g_z(x,y;v_1,v_2,\alpha)=g_z(x,y;\alpha)-\frac{1}{\gamma}\sum\limits_{j,k=1}^{2}\mathcal{E}_j(x,z)\gamma_{jk}\bar{\mathcal{E}}_k(y,\bar{z}).
\end{equation}
where $g_z(x,y;\alpha):=g(x,y;z)$ is the Green's function from Theorem \ref{2th1},
\begin{equation}\label{eq4a}
\begin{array}{ll}
\displaystyle  E_0(x,z)=g_z(x,1;\alpha), &\quad E_k(x,z)=\int_{0}^{1}\,g_z(x,y;\alpha)v_k(y)\,dy,\ \ k=1,2,\\[2mm]
\mathcal{E}_1(x,z)=E_1-2ie^{i\alpha}E_0, &\quad \mathcal{E}_2(x,z)=E_2+2iE_0.
\end{array}
\end{equation}
and
\begin{equation}\label{k6}
\begin{array}{l}
\gamma=\gamma_{11}\gamma_{22}-\gamma_{12}\gamma_{21},\\[2mm]
	\gamma_{11}(z)=-2i\left(1+2i\beta(z)-\langle E_0,v_2\rangle +\langle v_2,E_0(\cdot,{\bar{z}})\rangle+\frac{i}{2}\langle E_2,v_2\rangle\right)\\[2mm]
		\gamma_{12}(z)=-2i\left(2ie^{iz}\beta(z)+\langle E_0,v_1\rangle +e^{-i\alpha}\langle v_2,E_0(\cdot,{\bar{z}})\rangle-\frac{i}{2}\langle E_2,v_1\rangle\right),\\[2mm]
\gamma_{21}(z)=2i\left(-2ie^{i\alpha}\beta(z)+e^{i\alpha}\langle E_0,v_2\rangle +\langle v_1,E_0(\cdot,{\bar{z}})\rangle+\frac{i}{2}\langle E_1,v_2\rangle\right),\\[2mm]
	\gamma_{22}(z)=2i\left(1-2ie^{i\alpha}e^{iz}\beta(z)-e^{i\alpha}\langle E_0,v_1\rangle +e^{-i\alpha}\langle v_1,E_0(\cdot,{\bar{z}})\rangle-\frac{i}{2}\langle E_1,v_1\rangle\right).
	\end{array}
\end{equation}
The explicit form of the function $\beta(z)$ is given in $(\ref{21.6})$.
The matrix $\Gamma(z)=\|\gamma_{jk}(z)\|$ has the property $\Gamma^*(\overline{z})=\Gamma(z),$ and $\gamma(z) =\det \Gamma$ has the property
$\overline{\gamma}(\overline{z})=\gamma(z)\neq 0$.
\end{theorem}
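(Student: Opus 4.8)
The plan is to run the argument of Theorem~\ref{thm3.5} verbatim on the interval, replacing the whole-axis Green's function $\mathcal{G}_z(x,y;\alpha)$ by the finite-interval kernel $g_z(x,y;\alpha)$ of Theorem~\ref{2th1}. Symmetry of $A(v_1,v_2,\alpha)$ is already supplied by Lemma~\ref{5.2}, so it remains only to show that the deficiency indices vanish, i.e.\ that for one $z$ with $\Im z>0$ and one with $\Im z<0$ the equation $(A(v_1,v_2,\alpha)-zI)\psi=h$ is solvable in $L_2(0,1)$ for every $h$. Writing the action as $i\psi'+v_1\psi_1+v_2\psi_2$ with the functionals $\psi_1=\psi(0)-\frac{i}{2}\langle\psi,v_1\rangle$, $\psi_2=\psi(1)+\frac{i}{2}\langle\psi,v_2\rangle$ of Remark~\ref{rem5.1}, I would first recast the resolvent equation as the integral equation
$$\psi(x)=-\mathcal{E}_1(x,z)\psi_1-\mathcal{E}_2(x,z)\psi_2+\int_0^1 g_z(x,y;\alpha)h(y)\,dy,$$
with $\mathcal{E}_1,\mathcal{E}_2$ defined in $(\ref{eq4a})$. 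Note that $\mathcal{E}_1,\mathcal{E}_2$ each involve $E_0,E_1,E_2$, so the resulting perturbation is spanned by three functions and is of rank $3$, exactly the improvement announced before the theorem.

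The next step is to verify that every $L_2$-solution of this integral equation lies in the domain of $A(v_1,v_2,\alpha)$ and solves the resolvent equation. The integral term reproduces $(A_\alpha-zI)^{-1}h$ and hence carries the homogeneous boundary relation $\psi(1)=e^{i\alpha}\psi(0)$; the pieces $-E_1\psi_1-E_2\psi_2$ account for the two potential contributions; and the compensating multiples of $E_0(x,z)=g_z(x,1;\alpha)$ hidden inside $\mathcal{E}_1,\mathcal{E}_2$ shift the endpoint values by precisely the amount prescribed by the nonlocal boundary condition of Remark~\ref{rem5.1}. Both facts rest on the boundary identity $g_z(1,y;\alpha)=e^{i\alpha}g_z(0,y;\alpha)=e^{i\alpha}\beta(z)e^{izy}$ from $(\ref{21.11})$, together with $E_0(\cdot,z)$ being a solution of the homogeneous equation $i\psi'-z\psi=0$ away from the endpoint.

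Substituting this representation back into the definitions of $\psi_1$ and $\psi_2$, that is, evaluating $\psi(0),\psi(1),\langle\psi,v_1\rangle,\langle\psi,v_2\rangle$ from the integral equation, produces a closed $2\times2$ linear system
$$\begin{pmatrix}a_{11}&a_{12}\\ a_{21}&a_{22}\end{pmatrix}\begin{pmatrix}\psi_1\\ \psi_2\end{pmatrix}=\begin{pmatrix}\langle h,\mathcal{E}_1(\cdot,\bar z)\rangle\\ \langle h,\mathcal{E}_2(\cdot,\bar z)\rangle\end{pmatrix},$$
whose entries are assembled from $\beta(z)$, $e^{iz}\beta(z)$ and the inner products listed in $(\ref{k6})$, related by $\gamma_{11}=a_{22}$, $\gamma_{12}=-a_{12}$, $\gamma_{21}=-a_{21}$, $\gamma_{22}=a_{11}$. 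Solving for $\psi_1,\psi_2$ and reinserting them into the integral equation then yields exactly the kernel $(\ref{eq5a})$--$(\ref{k6})$.

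The crux, and the main obstacle, is controlling the determinant $\gamma(z)=\gamma_{11}\gamma_{22}-\gamma_{12}\gamma_{21}$ for large $|\Im z|$. Here I would invoke $\|g_z(\cdot)\|_{L_2}=1/\sqrt{2|\Im z|}\to0$, which forces every inner product in $(\ref{k6})$ to vanish, together with the endpoint asymptotics $\beta(z)\to i\theta(\Im z)$ and $e^{iz}\beta(z)\to -ie^{-i\alpha}\theta(-\Im z)$ as $|\Im z|\to\infty$. These give $\gamma_{11},\gamma_{22}\to 2i\sign(\Im z)$, $\gamma_{12}\to -4ie^{-i\alpha}\theta(-\Im z)$, $\gamma_{21}\to 4ie^{i\alpha}\theta(\Im z)$, hence $\gamma(z)\to -4$ in both half-planes, just as in Theorem~\ref{thm3.5}. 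Consequently $\gamma(z)\neq0$ for $|\Im z|$ large, the system is uniquely solvable, the deficiency indices are $(0,0)$, and $A(v_1,v_2,\alpha)$ is self-adjoint. Finally, a direct check that $\bar a_{jk}(\bar z)=a_{kj}(z)$, which follows from the self-adjointness identity $\overline{g}(y,x;z)=g(x,y;\bar z)$ of $(\ref{21.12})$ and the conjugation property of $\beta(z)$, delivers $\Gamma^*(\overline{z})=\Gamma(z)$ and $\overline{\gamma}(\overline{z})=\gamma(z)$. The only genuinely laborious part is the bookkeeping of the boundary evaluations of $E_0,E_1,E_2$ at the endpoints $0$ and $1$ when extracting the $a_{jk}$.
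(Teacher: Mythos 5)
Your proposal reproduces the paper's own argument: the same integral equation $\psi=-\mathcal{E}_1\psi_1-\mathcal{E}_2\psi_2+\int_0^1 g_z(x,y;\alpha)h(y)\,dy$, the same $2\times2$ system in $(\psi_1,\psi_2)$ with $\gamma_{11}=a_{22}$, $\gamma_{12}=-a_{12}$, $\gamma_{21}=-a_{21}$, $\gamma_{22}=a_{11}$, the same limit $\gamma(z)\to-4$ as $|\Im z|\to\infty$ (your asymptotics for $\beta(z)$, $e^{iz}\beta(z)$ and the resulting entries are exactly those the paper records), and the same conclusion via vanishing deficiency indices and the identity $\bar a_{jk}(\bar z)=a_{kj}(z)$. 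This is correct and essentially identical to the paper's proof.
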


\begin{proof}
For the symmetric operator $A(v_1,v_2, \alpha)$, the construction of the Green's function and~proof of self-adjointness is reduced to solve the problem
$$\left(A(v_1,v_2, \alpha)-z_kI\right)\psi=h,
$$
for any $h \in L_2(0,1)$ and nonreal $z_k$,\,\,$\Im z_1 >0,$\,\,$\Im z_2 <0$.
The solvability of this equation is~equivalent to solvability of the following equation:
\begin{equation}\label{k1}
i\psi'(x)-z_k\psi(x)=h(x)-v_1(x)\left[\psi(0)-\frac{i}{2}\langle \psi,v_1\rangle_{L_2}\right]-v_2(x)\left[\psi(1)+\frac{i}{2}\langle \psi,v_2\rangle_{L_2}\right]
\end{equation}
with boundary condition $(\ref{21.3})$.
The solution of the equation $(\ref{k1})$ has the following form
\begin{equation}\label{1a}
  \psi(x)=-E_0(x,z)w- E_1(x,z)\psi_1-E_2(x,z)\psi_2+\int_{0}^{1}\,g_z(x,y;\alpha)h(y)\,dy,
\end{equation}
where
  \begin{equation}\label{eq1a}
   \displaystyle  \psi_1=\psi(0)-\frac{i}{2}\langle\psi,v_1\rangle,\,\,\psi_2=\psi(1)+\frac{i}{2}\langle\psi,v_2\rangle
  \end{equation}
with the boundary condition
$$\psi(1)-e^{i\alpha}\psi(0)=-iw, \quad w=\langle\psi,v_2\rangle+e^{i\alpha}\langle\psi,v_1\rangle.
$$
According to Remark \ref{rem5.1}, we have the following equality:
$$\psi(1)-e^{i\alpha}\psi(0)=-iw=2(\psi_2-e^{i\alpha}\psi_1).
$$
Therefore,  the equation (\ref{1a}) can be represented as following:
\begin{equation}\label{eq2a}
  \begin{array}{l}
\displaystyle    \psi(x)=-E_0\cdot2i(\psi_2-e^{i\alpha}\psi_1)-E_1\psi_1 - E_2\psi_2+\int\limits_{0}^{1}\,g_z(x,y;\alpha)h(y)\,dy, \,\, \text{or} \\ [2mm]
 \displaystyle    \psi(x)=-\mathcal{E}_1(x,z)\psi_1-\mathcal{E}_2(x,z)\psi_2+\int\limits_{0}^{1}\,g_z(x,y,\alpha)h(y)\,dy.
  \end{array}
\end{equation}
The system for $\psi_1$ and $\psi_2$ has the form (\ref{eq1a}),
   where substituting the values $\psi(0)$ and $\psi(1)$ for function $\psi(x)$ from $(\ref{eq2a})$ in the right--hand side, and the function from $(\ref{eq2a})$  in the scalar products $\langle\psi,v_1\rangle$, $\langle\psi,v_2\rangle$.
   The explicit form of system $(\ref{eq1a})$ has the following form:
    \begin{equation}\label{eq3a}
       \begin{pmatrix}    a_{11} &  a_{12} \\  a_{21} &   a_{22}        \end{pmatrix} \begin{pmatrix} \psi_1\\ \psi_2    \end{pmatrix}=
    \begin{pmatrix} \langle h, \mathcal{E}_1(\cdot,{\bar{z}})\rangle\\
\langle h, \mathcal{E}_2(\cdot,{\bar{z}})\rangle \end{pmatrix},
    \end{equation}
where
\begin{equation}
\begin{array}{ll}
\gamma(z)=a_{11}a_{22}-a_{12}a_{21}=\gamma_{11}\gamma_{22}-\gamma_{12}\gamma_{21}\\
\gamma_{11}(z)=a_{22}(z)&\gamma_{12}(z)=-a_{12}(z)\\
\gamma_{21}(z)=-a_{21}(z)&\gamma_{22}(z)=a_{11}(z)
\end{array}
\end{equation}
and
\begin{equation}
\begin{array}{l}
		a_{11}(z)=2i\left(1-2ie^{i\alpha}e^{iz}\beta(z)-e^{i\alpha}\langle E_0,v_1\rangle +e^{-i\alpha}\langle v_1,E_0(\cdot,{\bar{z}})\rangle-\frac{i}{2}\langle E_1,v_1\rangle\right),\\[2mm]
a_{12}(z)=2i\left(2ie^{iz}\beta(z)+\langle E_0,v_1\rangle +e^{-i\alpha}\langle v_2,E_0(\cdot,{\bar{z}})\rangle-\frac{i}{2}\langle E_2,v_1\rangle\right),\\[2mm]
a_{21}(z)=-2i\left(-2ie^{i\alpha}\beta(z)+e^{i\alpha}\langle E_0,v_2\rangle +\langle v_1,E_0(\cdot,{\bar{z}})\rangle+\frac{i}{2}\langle E_1,v_2\rangle\right),\\[2mm]
		a_{22}(z)=-2i\left(1+2i\beta(z)-\langle E_0,v_2\rangle +\langle v_2,E_0(\cdot,{\bar{z}})\rangle+\frac{i}{2}\langle E_2,v_2\rangle\right).\\[2mm]
	\end{array}
\end{equation}
A determinant
    $$\gamma(z)= a_{11} a_{22}-a_{12} a_{21}\rightarrow -4 \ , \ \textrm{for}\ \  |\Im z|\rightarrow \infty,
    $$
     since $$
      \begin{array}{l}
   a_{11} (z)=2i\left(1-2ie^{i\alpha}e^{iz}\beta(z)\right) +o(1),\\[2mm]
a_{12}(z)=2i\left(2ie^{iz}\beta(z)\right)+o(1),\\[2mm]
   a_{21}(z)=-2i\left(-2ie^{i\alpha}\beta(z)\right)+o(1),\\[2mm]
a_{22}(z)=-2i\left(1+2i\beta(z)\right) +o(1), \,\,|\Im z|\rightarrow \infty.
       \end{array}
       $$
Therefore, there exist nonreal $z_1$, $z_2$ such that $\Im z_1>0$,\,\, $\Im z_2<0$ and $|\Im z_k|$,\, $j=1,2$   large enough that $\gamma(z)\neq 0$ and the system $(\ref{eq3a})$ has a unique solution.
This gives us the self-adjointness of the operator $A(v_1,v_2,\alpha)$.
We can show that $\bar a_{jk}(\bar z)=a_{kj}(z)$, then $\bar \gamma_{jk}(\bar z)=\gamma_{kj}(z)$. Therefore, the matrix $\Gamma(z)=\|\gamma_{jk}(z)\|$ has the property $\Gamma^*(\overline{z})=\Gamma(z),$ and $\gamma(z) =\det \Gamma$ has the property
$\overline{\gamma}(\overline{z})=\gamma(z)$.
Substituting  the values of $\psi_1$,\, $\psi_2$ in the right--hand side of $\psi(x)$ in the equation  $(\ref{eq2a}),$ 
 we obtain the representation of Green's function $(\ref{eq5a})$.
\end{proof}
\begin{remark}\label{k4}
  If we substitute $\mathcal{E}_1$ and $\mathcal{E}_2$ from $(\ref{eq4a})$ to $(\ref{eq5a})$, we can describe the Green's function explicitly using $E_0,E_1, E_2$:
\begin{equation}\label{k2}
	g_{z}(x,y;v_1,v_2, \alpha)=g_z(x,y;\alpha)-\frac{1}{\gamma}\sum_{j,k=0}^{2}\,E_j(x,z)c_{j,k}(z)\bar{E}_k(y,\bar{z}),
\end{equation}
where
\begin{equation}\label{k3}
E_0(x,z)=g_z(x,1;\alpha), \quad E_k(x,z)=\int_{0}^{1}\,g_z(x,y;\alpha)v_k(y)\,dy,\ \ k=1,2
\end{equation}
and
\begin{equation}
\begin{array}{lll}c_{00}=4\gamma_{11}-4e^{i\alpha}\gamma_{12}-4e^{-i\alpha}\gamma_{21}+4\gamma_{22}& c_{01}=-2ie^{i\alpha}\gamma_{11}+2i\gamma_{21} & c_{02}=-2ie^{i\alpha}\gamma_{12}+2i\gamma_{22} \\
c_{10}=2ie^{-i\alpha}\gamma_{11}-2i\gamma_{12}&c_{11}=\gamma_{11} & c_{12}=\gamma_{12} \\
c_{20}=2ie^{-i\alpha}\gamma_{21}-2i\gamma_{22}&c_{21}=\gamma_{21} & c_{22}=\gamma_{22}.
\end{array}
\end{equation}
The values $\gamma_{jk}(z)$ are given in Theorem \ref{thm5.3}. The matrix  $||c_{jk}||_{jk=0}^{2}$ is  degenerate. Its first column is a linear combination of second and third ones:
$$\col(c_{00},c_{10},c_{20})=2ie^{-i\alpha}\col(c_{01},c_{11},c_{21})-2i\col(c_{02},c_{12},c_{22}).
$$
The equalities
 $c_{jk}(z)=\bar{c}_{kj}(\bar{z}),$\,\,$\gamma(z)=\bar{\gamma}(\bar{z})\neq 0$ are valid
 providing the self-adjoint property of the operator $A(v_1,v_2,\alpha):$
 $${\bar{g}}_{\bar{z}}(y,x;v_1,v_2,\alpha)=g_z(x,y;v_1,v_2,\alpha).
 $$
\end{remark}
\begin{remark}
If we substitute $v_1=\frac{1}{2}v$ and $v_2=\frac{e^{-i\alpha}}{2}v$ to $(\ref{eq5a})$--$(\ref{k6})$ from Theorem $\ref{thm5.3}$, we obtain the explicit form of the resolvent of the operator $A(v,\alpha)$ from Remark $\ref{k5}$.
Indeed,
	\begin{equation}
		(A(v,\alpha)-zI)^{-1}h(z)=\int\,g_{z}(x,y;v, \alpha)h(y) \,dy
	\end{equation}
with the kernel
\begin{equation}
\displaystyle g_z(x,y;v,\alpha)=g_z(x,y;\alpha)-\frac{1}{\det F}\sum\limits_{j ,k=1}^{2}\,e_j(x,z)f_{jk}(z)\bar{e}_k(y,\bar{z}),
\end{equation}
 where $e_1(x,z)=g_z(x,1;\alpha),$\,\,$\displaystyle e_2(x,z)=\int_0^1\, g_z(x,y ;\alpha)v(y)\,dy$
and
\begin{equation}
	\begin{array}{ll}
f_{11}(z)=\langle e_2,v\rangle, \\[2mm]
f_{12}(z)=-\left(e^{i\alpha}+\langle v,e_1(\cdot,\overline{z})\rangle\right), \\[2mm]
f_{21}(z)=-\left(e^{-i\alpha}+\langle e_1,v\rangle\right), \\[2mm]
f_{22}(z)=\displaystyle \frac{i}{2} \frac{e^{-iz}+e^{i\alpha}}{e^{-iz}-e^{i\alpha}},\\[2mm]
\det F=f_{11}(z) f_{22}(z)-f_{12}(z) f_{21}(z).
	\end{array}
\end{equation}
\end{remark}
\section{Spectral analysis of momentum operator on a finite interval}

The considered models of self-adjoint operators are exactly solvable models. For the operator $A(v,0,\alpha)$, the eigenvalue problem is to find nontrivial solutions of the boundary value problem
\begin{equation}\label{3.1}
i\frac{d\psi}{dx}+v(x)\left[\psi(0)-\frac{i}{2}\langle\psi,v\rangle_{L_2}\right]=\lambda\psi,
\end{equation}
\begin{equation}\label{3.2}
\psi(1)=e^{i\alpha}\left[\psi(0)-i\langle\psi,v\rangle_{L_2}\right].
\end{equation}
The general solution of the equation
$(\ref{3.1})$ has the following form
\begin{equation}\label{3.3}
\psi(x)=C_1e^{-i\lambda x}-C_2\cdot i\cdot\int\limits_0^x e^{-i\lambda(x-y)}v(y)dy,
\end{equation}
where the constant
$C_1$ is arbitrary, and the constant $C_2$ is related to the solution by the equality
\begin{equation}\label{3.4}
C_2=-\left[\psi(0)-\frac{i}{2}\langle\psi,v\rangle_{L_2}\right].
\end{equation}
The condition that the solution
$\psi(x)$
of the form
$(\ref{3.3})$
satisfies the boundary condition $(\ref{3.2})$ leads to the equation
\begin{equation}\label{3.5}
C_1(e^{-i\lambda}-e^{i\alpha})-C_2\cdot i\cdot e^{-i\lambda}\tilde v=-ie^{i\alpha}\langle\psi,v\rangle,
\end{equation}
where
\begin{equation}\label{tildev}
\tilde v=\int\limits_0^1e^{i\lambda y}v(y)dy.
\end{equation}
If we substitute the solution $\psi$ of the form
$(\ref{3.3})$ into
$\langle\psi, v\rangle$, we get
\begin{equation}\label{3.6}
\langle\psi,v\rangle=C_1\tilde v^*-C_2i\hat v,
\end{equation}
where
\begin{equation}\label{3.7}
\hat{v}=\int\limits_0^1\int\limits_0^x e^{-i\lambda(x-y)}v(y)\overline{v(x)}dydx.
\end{equation}
Excluding from
$(\ref{3.5})$ and $(\ref{3.6})$ the value
$\langle\psi,v\rangle$, we have
\begin{equation}\label{3.8}
C_1\left(e^{-i\lambda}-e^{i\alpha}+ie^{i\alpha}\tilde v^*\right)-i C_2\left(e^{-i\lambda}\tilde v+ie^{i\alpha}\hat v\right)=0.
\end{equation}
We get
\begin{equation}\label{3.9}
C_1(2i+\tilde v^*)+i C_2(2-\tilde v)=0,
\end{equation}
excluding
$\psi(0)$ and $\langle\psi,v\rangle$ from $(\ref{3.3})$, $(\ref{3.4})$  and $(\ref{3.6})$.
Thus, the constants
$C_1$ and $C_2$ satisfy the homogeneous system of equations
 $(\ref{3.8})$,\, $(\ref{3.9})$. The~condition of the existence of non-trivial solutions of this system is equivalent to  the determinant
$\chi(\lambda)$
of this system is equal to zero.

This determinant $\chi(\lambda)$ has the following explicit form
\begin{equation}\label{3.10}
\chi(\lambda)=i(2i+\tilde v^*)(e^{-i\lambda}+ie^{i\alpha}\hat v)+i(2-\hat v)(e^{-i\lambda}-e^{i\alpha}+ie^{i\alpha}\tilde v^*).
\end{equation}
Given above leads to the following theorem.
\begin{theorem}
  The real zeros of the function
$\chi(\lambda)$ \,(\ref{3.3}) and only them
are the eigenvalues of the operator
$A(v,0,\alpha)$.
The non-trivial solutions of the system
$(\ref{3.8})$--$(\ref{3.9})$
give an explicit form of the eigenfunctions according to the formula
$(\ref{3.3})$.
\end{theorem}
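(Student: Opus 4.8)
The plan is to show that the eigenvalue problem $(\ref{3.1})$--$(\ref{3.2})$ is equivalent to the solvability of the homogeneous linear system $(\ref{3.8})$--$(\ref{3.9})$ in the two scalar unknowns $C_1,C_2$, with a nontrivial eigenfunction corresponding exactly to a nontrivial pair $(C_1,C_2)$, and hence to the vanishing of the determinant $\chi(\lambda)$ of that system displayed in $(\ref{3.10})$.

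First I would record that $A(v,0,\alpha)$ is self-adjoint by Theorem \ref{thm5.3} (the special case $v_2\equiv 0$), so every eigenvalue is automatically real; it therefore suffices to characterise the real $\lambda$ admitting a nontrivial solution of $(\ref{3.1})$--$(\ref{3.2})$. Suppose $\lambda$ is such an eigenvalue with eigenfunction $\psi$. The quantity $c:=\psi(0)-\tfrac{i}{2}\langle\psi,v\rangle_{L_2}$ is a fixed scalar determined by $\psi$, so $(\ref{3.1})$ is a first-order linear ODE with constant coefficient and forcing term $v(x)c$. The variation-of-constants formula then shows that every solution has the form $(\ref{3.3})$ with $C_1$ free and $C_2=-c$, which is precisely $(\ref{3.4})$. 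On the bounded interval $(0,1)$ such a $\psi$ lies automatically in $W_2^1(0,1)\subset L_2(0,1)$, so---unlike the whole-axis setting---there is no integrability obstruction to overcome.

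Next I would extract the two algebraic constraints. Evaluating $\langle\psi,v\rangle_{L_2}$ on the representation $(\ref{3.3})$ gives $(\ref{3.6})$ with the auxiliary integrals $\tilde v$, $\hat v$ from $(\ref{tildev})$, $(\ref{3.7})$. Substituting the self-referential definition $(\ref{3.4})$ of $C_2$ and eliminating $\psi(0)=C_1$ and $\langle\psi,v\rangle$ between $(\ref{3.4})$ and $(\ref{3.6})$ produces the self-consistency equation $(\ref{3.9})$; imposing the boundary condition $(\ref{3.2})$ on $(\ref{3.3})$ gives $(\ref{3.5})$, and eliminating $\langle\psi,v\rangle$ once more via $(\ref{3.6})$ yields $(\ref{3.8})$. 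Thus $(C_1,C_2)$ must solve the homogeneous system $(\ref{3.8})$--$(\ref{3.9})$. Since $\psi(0)=C_1$ and $C_2=-c$, one checks that $\psi\equiv 0$ forces $C_1=C_2=0$ and conversely, so a nontrivial eigenfunction exists if and only if the system has a nontrivial solution, i.e. if and only if $\chi(\lambda)=0$.

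For the converse I would run the argument backwards: given a real zero $\lambda$ of $\chi$, choose any nontrivial $(C_1,C_2)$ in the kernel of the system and define $\psi$ by $(\ref{3.3})$. Equation $(\ref{3.9})$ guarantees that the self-consistency $(\ref{3.4})$ holds, so $\psi$ genuinely solves the ODE $(\ref{3.1})$; equation $(\ref{3.8})$ guarantees the boundary condition $(\ref{3.2})$, placing $\psi$ in the domain of $A(v,0,\alpha)$. Hence $\psi$ is an eigenfunction with eigenvalue $\lambda$, and $(\ref{3.3})$ is its explicit form. The step I expect to be the main obstacle is verifying the \emph{exact} equivalence in both directions, in particular confirming that the circular relation $(\ref{3.4})$---which defines $C_2$ through $\psi$, while $\psi$ itself depends on $C_2$---is faithfully encoded by the linear equation $(\ref{3.9})$ with no loss or spurious gain of solutions; once this bookkeeping is pinned down, the determinant criterion is routine.
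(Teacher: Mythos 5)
Your proposal is correct and follows essentially the same route as the paper: the paper's "proof" is precisely the computation preceding the theorem statement, in which the eigenvalue problem (\ref{3.1})--(\ref{3.2}) is reduced via the variation-of-constants representation (\ref{3.3})--(\ref{3.4}) to the homogeneous $2\times 2$ system (\ref{3.8})--(\ref{3.9}) for $(C_1,C_2)$, whose nontrivial solvability is the vanishing of the determinant $\chi(\lambda)$ in (\ref{3.10}). Your additional care about the converse direction and about the self-consistency relation (\ref{3.4}) being faithfully encoded in (\ref{3.9}) only makes explicit what the paper leaves implicit.
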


Let us consider in more detail a particular case of the operator $A(v,0,\alpha)$,
when the nonlocal potential is a constant
$v(x)\equiv V$,\,\,$\alpha=\pi$
and
$e^{i\alpha}=-1$, that is the operator  $A(V,0,\pi).$
Then, using explicit form
$(\ref{tildev})$ and $(\ref{3.7})$ for
$\tilde v$ and $\hat v$, the characteristic function
$\chi(\lambda)$ can be written in
the following form
\begin{equation}\label{3.11}
\chi(\lambda)=4+2\frac{e^{-i\lambda}-1}{\lambda}\Bigl[\lambda+\overline{V}-V+\frac{i}{2}|V|^2\Bigr].
\end{equation}
The equation
$\chi(\lambda)=0$
can be rewritten as follows
\begin{equation}\label{3.12}
\frac{e^{-i\lambda}-1}{\lambda}=\frac{-2}{\lambda-2iS(V)},\quad\textrm{where}\quad
S(V)=\Im V-\frac{1}{4}|V|^2.
\end{equation}
Since the equation for the eigenvalues
$\lambda$
includes only the quantity
$S(V)$
that depends on the potential~$V$, then different potentials $V$ can have the same eigenvalues. So the potentials
$V=4i$ and $V=0$ have the same values
$S(4i)=S(0)=0$ and generate the same spectrum
$(\ref{21.4})$:\,$\lambda_n=(2n-1)\pi.$
\begin{defin}
 We will call the quantity
$S(V)=\Im V-\frac{1}{4}|V|^2$ the spectral characteristic of the constant nonlocal potential
$V$.
\end{defin}
\begin{lem}
Spectral characteristic $S(V)$ is less than or equal to $1$.
\end{lem}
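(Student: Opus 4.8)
The plan is to reduce the claim to an elementary quadratic inequality by writing the complex potential in terms of its real and imaginary parts. Set $V = a + bi$ with $a,b \in \R$. Then $\Im V = b$ and $|V|^2 = a^2 + b^2$, so by definition
\begin{equation*}
S(V) = \Im V - \tfrac{1}{4}|V|^2 = b - \tfrac{1}{4}\left(a^2 + b^2\right).
\end{equation*}
The entire statement is now a bound on a real-valued quadratic function of the two real variables $a$ and $b$, so the task is simply to show this expression never exceeds $1$.

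The main step is to complete the square in each variable. I would group the terms as
\begin{equation*}
S(V) = -\tfrac{1}{4}a^2 - \tfrac{1}{4}\left(b^2 - 4b\right)
     = -\tfrac{1}{4}a^2 - \tfrac{1}{4}\left(b - 2\right)^2 + 1,
\end{equation*}
which rewrites the spectral characteristic as
\begin{equation*}
S(V) = 1 - \tfrac{1}{4}\left(a^2 + (b-2)^2\right).
\end{equation*}
Since $a^2 + (b-2)^2 \ge 0$ for all real $a,b$, the claimed inequality $S(V) \le 1$ follows immediately.

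I would also record the equality case, since it pins down the extremal potential and makes the bound sharp: equality $S(V) = 1$ holds precisely when $a = 0$ and $b = 2$, that is, when $V = 2i$. (Equivalently, one can reach the same conclusion by a one-line critical-point computation, $\partial_a S = -a/2$ and $\partial_b S = 1 - b/2$, whose unique stationary point $(a,b) = (0,2)$ is a maximum by concavity.) There is no genuine obstacle here: the only thing to be careful about is the algebraic bookkeeping in completing the square, and in particular keeping the factor $\tfrac14$ attached to the correct terms so that the additive constant comes out exactly as $1$ rather than, say, $2$ or $\tfrac12$. This is what makes the bound tight, and it is worth double-checking the constant against the extremal value $S(2i) = 2 - \tfrac14\cdot 4 = 1$.
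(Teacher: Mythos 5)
Your proof is correct: writing $V=a+bi$ and completing the square to get $S(V)=1-\tfrac14\bigl(a^2+(b-2)^2\bigr)$ is exactly the elementary argument this lemma calls for (the paper itself states the lemma without proof). Your identification of the equality case $V=2i$ is also consistent with the paper's later designation of $V=2i$ as the resonant potential producing the double eigenvalue $\lambda=0$.
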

\begin{lem}
A general form of constant nonlocal  potentials with $S(V)=0$ is
$V=2\sin{2\varphi}+i4\sin^2{\varphi}$, where $\varphi\in[0,\pi]$.
\end{lem}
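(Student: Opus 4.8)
The plan is to reduce the condition $S(V)=0$ to a single real equation describing a circle in the complex plane, and then to recognize the proposed formula as a standard parametrization of that circle. First I would write $V=a+ib$ with $a,b\in\R$, so that $\Im V=b$ and $|V|^2=a^2+b^2$. Substituting into $S(V)=\Im V-\tfrac14|V|^2$, the equation $S(V)=0$ becomes $4b-a^2-b^2=0$, which after completing the square reads
\[
a^2+(b-2)^2=4.
\]
Thus the set of constant potentials with $S(V)=0$ is exactly the circle of radius $2$ centered at the point $2i$ in the complex $V$-plane.

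Next I would exhibit the proposed formula as a parametrization of this circle, which simultaneously handles the converse direction. Writing $V=2\sin 2\varphi+i\,4\sin^2\varphi$ and setting $\theta=2\varphi$, the half-angle identity $4\sin^2\varphi=2-2\cos 2\varphi$ gives $a=2\sin\theta$ and $b-2=-2\cos\theta$. Hence
\[
a^2+(b-2)^2=4\sin^2\theta+4\cos^2\theta=4
\]
automatically, so every $V$ of the stated form does satisfy $S(V)=0$, i.e.\ lies on the circle.

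It remains to check that the parametrization is onto the circle for $\varphi\in[0,\pi]$, which is the only point requiring a little care. As $\varphi$ ranges over $[0,\pi]$, the angle $\theta=2\varphi$ ranges over $[0,2\pi]$, and the map $\theta\mapsto(2\sin\theta,\,-2\cos\theta)$ traces the full circle exactly once (for instance $\theta=0,\tfrac{\pi}{2},\pi,\tfrac{3\pi}{2}$ give the points with $b=0,2,4,2$ and the expected signs of $a$). Therefore every point of the circle $a^2+(b-2)^2=4$ is attained, which together with the forward reduction establishes that the displayed formula is precisely the general form of constant nonlocal potentials with $S(V)=0$. I expect no genuine obstacle here; the only thing to state carefully is the surjectivity of the parametrization over the given range of $\varphi$, so that the word \emph{general} in the statement is justified rather than merely exhibiting a family of solutions.
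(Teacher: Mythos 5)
Your proof is correct and complete: the condition $S(V)=0$ is exactly the circle $a^2+(b-2)^2=4$ in the $V=a+ib$ plane, and the half-angle identity shows the given formula parametrizes that circle surjectively as $\varphi$ runs over $[0,\pi]$. The paper states this lemma without any proof, and your argument is the natural (essentially the only) verification, including the surjectivity check that justifies the word \emph{general}.
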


Passing to trigonometric functions  in the equation $(\ref{3.12})$, we get an equivalent equation for
$\displaystyle \xi=\frac{2}{\pi}\lambda$,
\begin{equation}\label{3.13}
\displaystyle F(\xi)\equiv\frac{\tan{\frac{\pi}{4}\xi}}{\frac{\pi}{4}\xi},\quad F(\xi)=\frac{1}{S(V)},
\end{equation}
where
$\displaystyle \lambda=\frac{\pi}{2}\xi$.
Since the function
$F(\xi)$ is
even for
$\xi$,
then the solution of the equation
$(\ref{3.13})$
will be symmetric with respect to the point
$\xi=0$. The analysis of the equation
$(\ref{3.13})$ proves lemma.
\begin{stw}
The equation $(\ref{3.13})$ has a solution
$\xi=0$ and therefore
$\lambda=0$
if and only if $V=2i$.
In this case
$\lambda=0$ has a multiplicity $2$.
The orthogonal eigenfunctions corresponding to the value $\lambda=0$ have the form
$\psi_1=1$, $\displaystyle \psi_2=x-\frac{1}{2}$.
\end{stw}
\begin{defin}
We will call the potential $V=2i$  resonant.
\end{defin}
\begin{theorem}
The spectrum of the self--adjoint operator $A(v,0,\pi)$ consists of eigenvalues $\lambda$ which are solutions of the equation:
\begin{equation}\label{6.15}
  \frac{\tan \frac{\lambda}{2}}{\frac{\lambda}{2}}=S^{-1}(v).
\end{equation}
If the resonant potential $V=2i$  generates the spectrum: a double eigenvalue
$\lambda=0$  and positive eigenvalues
\begin{equation*}
\lambda_n=(2n+1)\pi-\frac{2}{(2n+1)\pi}+O\Bigl(\frac{1}{n^3}\Bigr),\quad n=1,2,\ldots
\end{equation*}
with $S^{-1}(V)>1$, then all positive eigenvalues consist of $\lambda_0\in(0,\pi)$,\,\,$\lambda_n\in(2n\pi,(2n+1)\pi)$ having the form
\begin{equation*}
\lambda_n=(2n+1)\pi-\frac{2}{(2n+1)\pi S^{-1}(V)}+O\Bigl(\frac{1}{n^3}\Bigr),\quad n=1,2,\ldots
\end{equation*}
If $S^{-1}(V)<0$, then all positive eigenvalues have the following form:
\begin{equation*}
\lambda_n=(2n-1)\pi-\frac{2}{(2n-1)\pi |S^{-1}(V)|}+O\Bigl(\frac{1}{n^3}\Bigr),\quad n=1,2,\ldots
\end{equation*}
All negative eigenvalues are symmetric to the positive ones  having the forms $-\lambda_n$.
\end{theorem}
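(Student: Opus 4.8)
The plan is to reduce the eigenvalue problem to the single scalar equation $(\ref{6.15})$ and then to carry out a root--localization and asymptotic analysis of its real solutions. First I would invoke the preceding theorem, which identifies the point spectrum of $A(v,0,\alpha)$ with the real zeros of the characteristic determinant $\chi(\lambda)$. Specializing to the constant potential $v\equiv V$ and $\alpha=\pi$ and using the explicit forms $(\ref{3.11})$--$(\ref{3.13})$, these zeros coincide with the solutions of $F(\lambda)=S^{-1}(V)$, where $F(\lambda):=\tan(\lambda/2)\big/(\lambda/2)$. Since the operator acts on the finite interval $(0,1)$, its resolvent (Theorem $\ref{thm5.3}$) is an integral operator with bounded kernel, hence Hilbert--Schmidt and compact, so the spectrum is purely discrete and is exhausted by these roots. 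As $F$ is even, the roots occur in symmetric pairs $\pm\lambda_n$, which at once yields the final assertion that the negative eigenvalues are $-\lambda_n$; it therefore suffices to study $\lambda\ge 0$, and I set $c:=S^{-1}(V)$.

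The heart of the argument is a monotonicity--and--counting step. Writing $u=\lambda/2$, a direct computation shows that $F'(\lambda)$ has the sign of $h(u):=u\sec^2 u-\tan u$, and since $h'(u)=2u\sec^2 u\tan u$ one checks that $h>0$ for all $u>0$; hence $F$ is strictly increasing across each interval between consecutive poles of $\tan(\lambda/2)$, located at $\lambda=(2m+1)\pi$. On each such pole--interval $\big((2m-1)\pi,(2m+1)\pi\big)$ the function $F$ climbs monotonically from $-\infty$ to $+\infty$, passing through $0$ at $\lambda=2m\pi$, so $F=c$ has exactly one root there, lying in $\big(2m\pi,(2m+1)\pi\big)$ when $c>0$ and in $\big((2m-1)\pi,2m\pi\big)$ when $c<0$. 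On the central interval $(-\pi,\pi)$ the even function $F$ attains its minimum value $F(0)=1$, so $F=c$ has a pair $\pm\lambda_0$ with $\lambda_0\in(0,\pi)$ when $c>1$, a double root at $0$ when $c=1$ (whose multiplicity two I would confirm from $F(\lambda)-1=\lambda^2/12+O(\lambda^4)$ and match against the explicit eigenfunctions of the preceding Statement), and no root when $c<1$. Invoking the Lemma that $S(V)\le 1$ restricts the admissible values to $c\ge 1$ and to $c<0$, which is precisely the case split of the statement; this determines the count and location of every eigenvalue.

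It remains to produce the asymptotic expansions. For a root approaching the pole at $(2m+1)\pi$ (the case $c>0$) I would set $\lambda=(2m+1)\pi-\mu$ with $\mu\to 0^+$ and use $\tan(\lambda/2)=\cot(\mu/2)$ with $\cot(\mu/2)=2/\mu-\mu/6+O(\mu^3)$; substituting into $F(\lambda)=c$ gives a balance of the form $2/\mu=c\,(\lambda/2)+O(\mu)$ with $\lambda/2=(2m+1)\pi/2+O(\mu)$. Solving this by fixed--point iteration and retaining one further order yields a correction $\mu$ of size $O\big(1/((2m+1)\pi\,c)\big)$, which after relabeling produces an expansion of the displayed form $\lambda_n=(2n+1)\pi-\dfrac{2}{(2n+1)\pi\,S^{-1}(V)}+O(n^{-3})$, the resonant case $c=1$ being the specialization $S(V)=1$. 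The analogous expansion near the pole at $(2n-1)\pi$ handles the case $c<0$, and the negative eigenvalues then follow by evenness.

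The step I expect to be the main obstacle is this last one: pinning down the exact coefficient and sign of the leading correction to the claimed order $O(n^{-3})$. The leading balance only fixes the scale $1/\big((2n\pm1)\pi\,c\big)$ of the correction, whereas obtaining the precise constant requires carefully tracking the next term of the $\cot$ expansion, the $O(\mu)$ error in $\lambda/2$, and the side of the pole on which the root sits; these must be reconciled with the displayed formulas, and the remainder estimate must be shown uniform in $n$. By contrast, the global bookkeeping is secured once monotonicity gives exactly one root per pole--interval with none lost at the endpoints, so that the enumeration $\{\pm\lambda_n\}$ is provably complete.
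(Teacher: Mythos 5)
Your route is the same as the paper's: the paper's entire proof is a one‑line appeal to the graph of $F(\xi)$ intersected with the horizontal lines $y=S^{-1}(V)$ (Figure 1), and your monotonicity‑and‑counting argument is simply a rigorous version of that picture. That part of your proposal is sound, with one small repair needed: from $h'(u)=2u\sec^2u\tan u$ you cannot conclude $h>0$ directly, since $h'$ changes sign on each interval $((m+\tfrac12)\pi,(m+1)\pi)$; you must additionally note that $h\to+\infty$ at each pole and $h(m\pi)=m\pi>0$, after which the piecewise monotonicity of $h$ does give $h>0$ for all $u>0$. The localization $\lambda_0\in(0,\pi)$, $\lambda_n\in(2n\pi,(2n+1)\pi)$ for $c:=S^{-1}(V)>1$, one root per pole interval, the double root at $0$ when $c=1$, and the symmetry $\pm\lambda_n$ are then all correct, and the restriction to $c\ge1$ or $c<0$ via the lemma $S(V)\le1$ is the right bookkeeping.

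The genuine gap is exactly the step you flag, and it does not close in the direction you hope. Completing your own leading balance near the pole $(2n+1)\pi$ gives $2/\mu=c\,(2n+1)\pi/2+O(\mu)$, hence $\mu=\dfrac{4}{c\,(2n+1)\pi}+O(n^{-3})$ and
\begin{equation*}
\lambda_n=(2n+1)\pi-\frac{4}{(2n+1)\pi\,S^{-1}(V)}+O\Bigl(\frac{1}{n^{3}}\Bigr),
\end{equation*}
i.e.\ coefficient $4$, not the displayed $2$; a numerical check in the resonant case ($\tan u=u$, first root $u_1\approx4.4934$, so $\lambda_1=2u_1\approx8.99$ versus $3\pi-\tfrac{4}{3\pi}\approx9.00$ and $3\pi-\tfrac{2}{3\pi}\approx9.21$) confirms this. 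Worse, for $c<0$ your own localization puts the root in $((2n-1)\pi,2n\pi)$, just \emph{above} the pole: writing $\lambda=(2n-1)\pi+\mu$ one finds $\cot(\mu/2)=|c|\lambda/2$, forcing $\mu>0$ and $\lambda_n=(2n-1)\pi+\tfrac{4}{(2n-1)\pi|S^{-1}(V)|}+O(n^{-3})$, with the opposite sign to the displayed formula. So the method you propose (which is the only method consistent with equation (6.15)) cannot terminate in the asymptotics as printed; the displayed corrections appear to carry a spurious factor $1/2$ and a wrong sign in the $c<0$ case. The paper's proof offers no derivation of these constants, so there is nothing there to reconcile with — but your write‑up must either correct the displayed formulas or exhibit where an extra factor enters, and as it stands it does neither.
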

\begin{proof}
  It follows from the solution of  equation (\ref{6.15}).
The graph of $F(\xi)$ demonstrates these statements  (see Figure 1). The intersection of the graph of  function $y=F(\xi)$ (see (\ref{3.13})) with horizontal lines $y=S^{-1}(V)$ gives the points with the abscissas  $\displaystyle \xi_n=\frac{2}{\pi}\lambda_n.$
\end{proof}
\newpage
\begin{figure}[ht]

\centering
	
\includegraphics[width=0.8\textwidth]{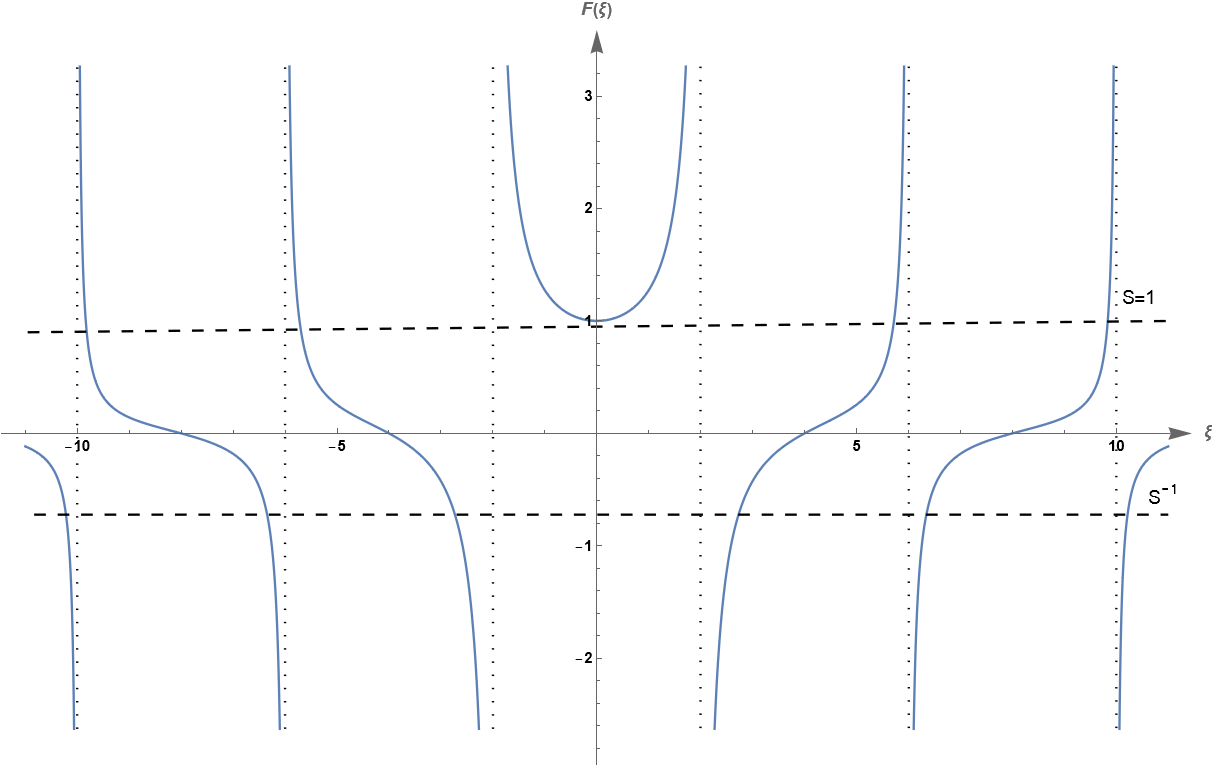}

\caption{Graph of F}
	
\label{fig1}
	
 \end{figure}

\newpage

\bigskip\noindent
{\bf Acknowledgments}
The authors wish to express their gratitude to Petru A. Cojuhari and  Leonid P. Nizhnik  for problems posed and many
useful discussions during the preparation of the manuscript. I.N. is grateful to the Simons Foundation for the financial support which has helped to accomplish  this manuscript.

\newpage
Kamila D\k{e}bowska\\
debowskamila@gmail.com\\
AGH University of Science and Technology\\
Faculty of Applied Mathematics\\
al. Mickiewicza 30, 30-059, Krakow, Poland\\ \ \\
Irina L. Nizhnik\\
irinanizh17@gmail.com\\
Institute of Mathematics of NASU\\
Tereshchenkivska Str.,3, 01 601 Kyiv, Ukraine
\end{document}